\UseRawInputEncoding
\documentclass[12pt, leqno]{article}
\usepackage{latexsym}
\usepackage{enumerate,amsmath,amsthm,amssymb,amsfonts,amscd}

\newtheorem{defn}{Definition}[section]
\newtheorem{thm}[defn]{Theorem}
\newtheorem{cor}[defn]{Corollary}
\newtheorem{prop}[defn]{Proposition}
\newtheorem{lem}[defn]{Lemma}

\numberwithin{equation}{section}

\def\F{\mbox{${\cal F}$}}

\def\O{\mbox{${\cal O}$}}

\newcommand{\cc}{\mathbb C}
\newcommand{\zz}{\mathbb Z}

\renewcommand{\a}{\alpha}

\input{arrow.tex}

\begin{document}
\title{\bf The functional analytic approach establishes the Corona theorem in the theory of SCV}
\author{S. R. PATEL}
\maketitle \vspace{1.5cm}
\noindent Department of Mathematics, C. U. Shah University, Wadhwan City,\\
Gujarat, INDIA\\
E-mail: srpatel.math@gmail.com\\
\begin{tabbing}
\noindent {\bf 2020 Mathematics Subject Classification:} \= Primary: 46J05;\\
\> Secondary: 30H80, 32A38\\
\> 46E25, 46J10, 46J20
\end{tabbing}
\noindent {\bf Keywords} Corona problem, Gleason's problem,
locally Stein algebras, finitely generated maximal ideal, Gleason
$A$-property
\newpage
\noindent {\bf Abstract} We give an affirmative solution of the
{\it original} Corona problem (1962) for certain bounded
pseudoconvex domains or polydomains in $\cc^n$, including the
balls and polydiscs, by using the Gleason solvability property of
such domains for $H^\infty(G)$. In fact, we establish an
equivalency of the Corona problem and Gleason's problem (1964) in
the theory of several complex variables. Indeed, we extend our
recent works on both problems based on the functional analytic
approaches, and extend recent work of Clos, as well as apply the
usual Banach algebra method. As an application, we establish the
I. J. Schark's theorem for these $G$'s. We also obtain a welcome
extension of a result of Krantz and Li (1995) for a bounded,
strictly pseudoconvex domain in $\cc^n$ with $C^3$ boundary.
\newpage
\section{Introduction}
\noindent {\bf 1.1 History of the two problems}

\noindent {\bf 1.1.1 Corona problem} In 1942, Kakutani asked the
following fundamental question (known as {\it Corona problem}).
Let $H^\infty(D)$ be the Banach algebra of all bounded holomorphic
functions on the open unit disc $D$ in the complex plane $\cc$,
equipped with the usual sup-norm. Let $f_j \in H^\infty(D), \,j =
1, 2,\dots, n$, with the property that $\sum_{j = 1}^n|f_j(z)| >
\epsilon$ for some $\epsilon > 0$ and all $z \in D$ (namely, a
finite set of corona data on the disc $D$), do there exist $g_j
\in H^\infty(D), \,j = 1, 2,\dots, n$, such that $\sum_{j =
1}^nf_jg_j = 1$; i.e., $\sum_{j = 1}^nf_j(z)g_j(z) = 1$ for all $z
\in D$? This latter equation is known as the B\'{e}zout equation
~\cite[Theorem V.1.8]{20}.

The equivalent formulation of this question is to ask whether $D$
is dense in the maximal ideal space (aka spectrum)
$M(H^\infty(D))$ w.r.t. the Gel'fand (i.e., relative weak*-)
topology (equivalently, whether the corona is empty). Carleson
affirmatively solved the problem on the disc $D$ in 1962
~\cite{11}.

By a {\it domain} we mean an open, connected and bounded set $G$
in $\cc^n$. At this time there is no known domain in $\cc$ on
which this problem is failed.

Since 1962, the corona problem is open for arbitrary domains
(including the balls and polydiscs) in $\cc^n,\,n > 1$, and is a
lot more complicated. There have been investigations of this
problem on various types of domains, as well as in various Banach
spaces of holomorphic functions on these domains. For example, see
some partial results on the corona problem in ~\cite{9, 10, 32,
39}. Not only this, but a couple of counterexamples to the corona
problem in the theory of SCV have been produced in ~\cite{18, 42}.
The extensive exposition of history of the research of the problem
is outlined in ~\cite{16}; see \S 4 for details.

The main purpose of this paper is to provide favorable results on
the {\it original} corona problem for certain pseudoconvex/poly
domains $G$ (including the ball/polydisc) in $\cc^n$; see Theorems
2.4 and 3.1 and Corollary 2.5 below. As a consequence, we also
establish the weak corona theorem for $H^\infty(G)$ ~\cite{2, 3}.
In the process, we also obtain more general/extended results in
some cases (e.g., see ~\cite{30, 28}, and Theorem 2.6).

In fact, after the Carleson's solution, most of the subsequent
attempts to extend it for various Banach spaces of holomorphic
functions on more general domains in $\cc^n$ were aimed at solving
the B\'{e}zout's equation by various classical methods (mostly, by
solving the $\bar{\partial}-$equation); see, for instance,
~\cite{28, 30, 31, 44, 45}. These approaches required hard
classical analysis methods and has met serious difficulties in the
case when $n > 1$, {\bf not} leading to the solution to the
original $H^\infty$ corona problem; see, for instance, ~\cite{30,
31, 45}. For example, Varopoulos in ~\cite{45} realized that his
result {\it does not imply} that the corona problem fails in
several variables--only that the $\bar{\partial}-$technique with
that particular definition of Carleson measure fails (see \S 4).
We discuss a functional analytic approach different from those
classical analytic approaches.

In \S 2, we discuss an equivalency between the corona problem and
Gleason's problem (1964) in Theorems 2.4 and 2.7. This has become
possible as some results from ~\cite{37, 38} are the bases of the
idea of equivalency, as well as certain results of Clos, and
Kerzmann and Nagel are extended.

\noindent {\bf 1.1.2 Gleason's problem} Let $A(G)$ be the Banach
algebra of all holomorphic functions on a domain $G$ in $\cc^n$
that extend by continuity to the closure of $G$, equipped with the
sup-norm. Thus, it is a closed subalgebra of $H^\infty(G)$ of all
bounded holomorphic functions on $G$ such that the polynomials in
$z_1, z_2, \dots, z_n$ are dense in $A(G)$ (but not in
$H^\infty(G)$). For our convenience, we may consider $A(G)$ as the
Banach algebra of all uniformly continuous, holomorphic functions
on $G$. The Gleason's problem is to decide whether the ideal in
$A(G)$ (or $H^\infty(G)$), consisting of functions vanishing at a
fix point $\lambda = (\lambda_1, \lambda_2, \dots, \lambda_n) \in
G$, is algebraically finitely generated by the shifted coordinate
functions $z_j - \lambda_j$ for $j = 1, 2, \dots, n$ (see
~\cite{6} and other references therein for details). This is
actually a variant of the original Gleason's problem for $A(G)$,
$G$ the unit ball in $\cc^2$ and $\lambda = (0, 0)$ ~\cite{21}.

\noindent {\bf 1.2 Background} This paper contains continuation of
the works begun in ~\cite{37, 38}, and we shall feel free to use
the terminology and conventions established there. However, this
current work is specifically concerned with the solvability of
these two problems for $A(G)$ and $H^\infty(G)$, $G$ certain
bounded pseudoconvex/poly domains in $\cc^n$. In ~\cite{37}, we
extend the Gleason's result to finitely generated ideals in
Fr\'{e}chet algebras, answering affirmatively a question posed in
~\cite{12} for Fr\'{e}chet algebras. As consequences, locally
Stein algebras are completely characterized by intrinsic
properties within the class of Fr\'{e}chet algebras, using which
an affirmative answer to Gleason's problem for such algebras is
provided. In fact, we remark that complex analysts may find this
functional analytic approach interesting from applications point
of view. For example, see ~\cite[Theorems 4.2, 4.3 and Corollary
4.4]{37}, and references (and their reviews) to ~\cite{21} in
MathSciNet (MR0159241 (28 \#2458)). As we will see, this is,
indeed, the case.

Banach algebras satisfying ~\cite[Lemma 3.1]{37} (in particular,
the polydisc algebra $A(D^n)$ and the ball algebra $A(B^n)$) are
locally Stein-Banach algebras. In ~\cite{38}, we show that Banach
subalgebras $A^k(\overline{G})$ and $\textrm{Lip}^{k,\, \a}_H(G)$
of $H^\infty(G)$, for which we want to solve Gleason's problem,
are also locally Stein-Banach algebras. We now show that
$H^\infty(G)$ is a locally Stein-Banach algebra in Theorem 2.7.
Further, a subspace $X$ of $M(A)$ has the Gleason $A-$property if
the problem has an affirmative solution at all points of $X$ for a
locally Stein algebra $A$. Clos calls it the Gleason solvability
property of a domain $G$ for $H^\infty(G)$. The Gleason
solvability was first proved by Leibenson, albeit informally
~\cite{41}. The Gleason solvability for holomorphic Bergman
spaces, Bergman-Sobolev spaces and the holomorphic (harmonic)
Bloch spaces were studied in ~\cite{41, 35, 47} (~\cite{25}). The
domains considered were initially the unit ball, but then were
generalized to (bounded) strongly pseudo(convex) domains with
$C^2$ boundary.

In ~\cite[Corollary 4.4]{37}, we show that the dense open subspace
has the Gleason $A$-property, where $A$ is a semi-simple locally
Stein algebra. In particular, $A$ can be either $H^\infty(G)$ or
$A(G)$, where $G$ is a (strictly or weakly) pseudoconvex domain in
$\cc^n$ with various boundary conditions and containing the
origin. Thus the functional analytic method recaptures the
classical results on Gleason's problem obtained by Beatrous Jr.
~\cite{7}, Forn\ae ss and \O vrelid ~\cite{17}, Kerzman and Nagel
~\cite{27}, Noell ~\cite{34}, and Backlund and F\"{a}llstr\"{o}m
~\cite{6} (see other references therein for a list of papers on
this problem). The corona and Gleason's problems are recently
solved for Lipschitz algebras of holomorphic functions. In fact,
we affirmatively answer a question of Forn\ae ss and \O vrelid
(1983) for holomorphic H\"{o}lder and Lipschitz spaces ~\cite{38}.

Concerning the corona problem, as far as we know, almost all
attempts in the past were either relying on the solution of the
B\'{e}zout equation or relying on the use of the Koszul complex
(an idea developed for the computation of the cohomology of a Lie
algebra ~\cite{28, 44}) while applying the
$\bar{\partial}-$method. As we shall see, we find it necessary
(mathematically, ``sufficient") to consider the Gleason
solvability while solving the corona problem. On the other hand,
although we have not spelled it out explicitly, but have
considered the corona theorem as a hypothesis while solving
Gleason's problem ~\cite[Theorem 4.3 and Corollary 4.4]{37}. In
fact, we have a pleasant result of \.Zelazko ~\cite[Proposition
2]{46}, showing a possible connection between these two problems
through their algebraic formulations. Moreover Kerzman and Nagel
used the Koszul complex while solving Gleason's problem for
certain function algebras on certain domains in $\cc^n$. Now, all
this developments stem to think on an equivalency of these two
problems, as well as to give a short elegant proof, by using the
(maximal) ideal structures of Banach algebras $A(G)$ and
$H^\infty(G)$ alone. That means, we keep ourselves away from the
analytical aspects such as Gleason parts of the spectra of these
algebras.

In \S 2, one of our main goals is to extend ~\cite[Theorem 4]{13}.
In \S 3, we also solve the corona problem by applying a mixture of
our Banach algebra method and the most elegant method of Krantz
and Li ~\cite{30}. In \S 4, we give some concluding remarks on our
approach, leading to some open problems.
\section{Equivalency of the two problems}
Throughout the paper, ``algebra" will mean a complex, commutative
algebra with identity. As discussed above, we first show that
Theorem 6 of ~\cite{13} holds true for the Banach algebra $A(G)$,
where $G$ is a certain ``pseudoconvex" domain in $\cc^n$. This is
possible because of Theorem 5 of ~\cite{13} and the following
\begin{lem} \label{A(G)_subset} Let $G$ be a pseudoconvex domain
in $\cc^n$. Then $A(G)$ is a dense Banach subspace of the Bergman
space $B(G)$.
\end{lem}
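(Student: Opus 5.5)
The proof splits into two parts: a continuous inclusion $A(G)\hookrightarrow B(G)$, and density of the image. Since $G$ is bounded, it has finite Lebesgue volume $|G|$. Every $f\in A(G)\subset H^\infty(G)$ is holomorphic and satisfies
\[
\|f\|_{B(G)}=\Big(\int_G |f|^2\,dV\Big)^{1/2}\le |G|^{1/2}\,\|f\|_\infty .
\]
So $A(G)\subset B(G)$, and the inclusion is a bounded injective linear map. In this sense $A(G)$, with its own complete sup-norm, is a Banach subspace of $B(G)$ that is continuously embedded in it. This part is routine.

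Density is the real content, and it cannot hold for completely arbitrary pseudoconvex $G$. For example, a slit or Hartogs-type domain whose interior is smaller than the interior of its closure can fail the density. I therefore read ``pseudoconvex domain'' in the sense used in the rest of the paper: bounded, pseudoconvex, with (at least) $C^2$ boundary. Under this reading the plan is a localisation-and-$\bar\partial$-correction argument.

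\textbf{Step 1 (reduction).} Fix $f\in B(G)$ and $\varepsilon>0$. It suffices to produce $F$ holomorphic on a neighbourhood of $\overline G$ with $\|f-F\|_{L^2(G)}<\varepsilon$, since such an $F$ lies in $A(G)$.

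\textbf{Step 2 (local push-out).} Cover $\partial G$ by finitely many balls $U_1,\dots,U_N$, and add $U_0\Subset G$. On each $U_j$ choose a vector $\nu_j$ that is uniformly transverse to $\partial G$ and points outward. Define $f_{j,t}(z)=f(z-t\nu_j)$. For small $t>0$ this function is holomorphic on a neighbourhood of $\overline{G}\cap U_j$. By continuity of translation in $L^2$ (the $C^2$ boundary, hence Lipschitz, is enough here), $f_{j,t}\to f$ in $L^2(G\cap U_j)$ as $t\to0$.

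\textbf{Step 3 (patching).} Take a partition of unity $\{\chi_j\}$ subordinate to $\{U_j\}$ and set $h_t=\sum_j\chi_j f_{j,t}$. This $h_t$ is smooth on a neighbourhood $W$ of $\overline G$, and $h_t\to f$ in $L^2(G)$. Its $\bar\partial$ is
\[
\bar\partial h_t=\sum_j f_{j,t}\,\bar\partial\chi_j=\sum_j (f_{j,t}-f_{0,t})\,\bar\partial\chi_j ,
\]
where the second equality uses $\sum_j\bar\partial\chi_j=0$ and $f_{0,t}$ is any one of the local translates on the overlap. Hence $\|\bar\partial h_t\|_{L^2}\to0$.

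\textbf{Step 4 (correction, the main obstacle).} We must solve $\bar\partial u_t=\bar\partial h_t$ on a pseudoconvex domain $G_t$ with $\overline G\subset G_t\subset W$, with an $L^2$ estimate $\|u_t\|_{L^2(G_t)}\le C\|\bar\partial h_t\|_{L^2(G_t)}$ in which $C$ is independent of $t$. Then $F=h_t-u_t$ is holomorphic near $\overline G$ and close to $f$ in $L^2(G)$.

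To obtain such neighbourhoods I would use the Diederich--Forn\ae ss bounded plurisubharmonic exhaustion $-(-\delta e^{-K\psi})^\eta$ for $C^2$ pseudoconvex boundaries. Its sublevel/superlevel sets give slightly enlarged domains with controlled geometry. On these I would apply Hörmander's $L^2$ theorem, whose constant depends only on the diameter and is therefore uniform in $t$. The delicate point is that $h_t$ must live on $G_t$, so $t$ and the enlargement must be chosen compatibly. If the Diederich--Forn\ae ss route proves awkward, my fallback is to solve on $G$ itself with Kohn's weighted $\bar\partial$-Neumann theory, which gives regularity up to the boundary. That yields $u_t$ continuous on $\overline G$, hence $F\in A(G)$ directly.
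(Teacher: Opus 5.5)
Your inclusion step is fine, and your caveat is correct. On the slit disc $D\setminus[0,1)$, which is pseudoconvex like every planar domain (take its product with $D^{n-1}$ in $\cc^n$), uniform continuity forces every $f\in A(G)$ to extend continuously, hence holomorphically, across the slit. So the $L^2$-closure of $A(G)$ is the Bergman space of $D$, which misses the branch of $\sqrt z$ cut along $[0,1)$. Since $\overline D$ has a Stein neighbourhood basis, this also shows that a Stein neighbourhood basis alone does not suffice; your translation (segment) property of the boundary is essential.

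The genuine gap is in Step 4. The Diederich--Forn\ae ss function $-(-\delta e^{-K\psi})^\eta$ is a bounded plurisubharmonic exhaustion of $G$ itself. It is negative in $G$, tends to $0$ at $\partial G$, and is neither defined nor plurisubharmonic outside $\overline G$. Its level sets are therefore relatively compact subdomains of $G$, never neighbourhoods of $\overline G$. Nor can any other device produce pseudoconvex $G_t$ with $\overline G\subset G_t\subset W$ in your generality. The Diederich--Forn\ae ss worm domains (reference [15] of the paper) are pseudoconvex with $C^\infty$ boundary, yet their closures have no Stein neighbourhood basis. So your main route works only under the extra hypothesis that $\overline G$ has a Stein neighbourhood basis. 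In that case take $G_t$ from the basis and apply H\"ormander. You must then check that $\|\bar\partial h_t\|_{L^2(G_t)}$, not merely $\|\bar\partial h_t\|_{L^2(G)}$, tends to $0$. For $G_t\subset W$ this follows from the substitution $w=z-t\nu_j$ and continuity of translation in $L^2$.

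Your fallback is the right tool, but Kohn's weighted $\bar\partial$-Neumann regularity needs a $C^\infty$ boundary (or at least a high-order smooth one), so it does not reach the $C^2$ class you announce. For $C^\infty$ boundaries it does close the proof, and you should say how:
\begin{enumerate}
\item Fix $k>n$, and fix $T$ so large that $\bar\partial^*_TN_T$ is bounded on $W^k_{(0,1)}(G)$.
\item Set $u_t=\bar\partial^*_TN_T\,\bar\partial h_t$. Since $h_t$ is smooth on $\overline G$, $u_t\in W^k(G)\subset C(\overline G)$.
\item The weighted H\"ormander estimate gives $\|u_t\|_{L^2(G)}\le C_T\|\bar\partial h_t\|_{L^2(G)}$, with $C_T$ independent of $t$.
\item Hence $h_t-u_t\in A(G)$ and $h_t-u_t\to f$ in $L^2(G)$.
\end{enumerate}
As written, then, your argument proves the lemma for smoothly bounded pseudoconvex domains and for $C^1$-bounded pseudoconvex domains whose closures have a Stein neighbourhood basis, but not for general $C^2$ ones.

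For comparison, the paper's proof is entirely different. It asserts that polynomials are dense in $B(G)$, via a claimed isometric isomorphism with a weighted $\ell^\infty$ space, and concludes because polynomials lie in $A(G)$. You were right not to take that route. Polynomial density fails even for the smooth strictly pseudoconvex domain $\{(|z_1|-\tfrac32)^2+|z_2|^2<\tfrac14\}$, which has a Stein neighbourhood basis. On the slice $z_2=0$, $L^2(G)$-convergent polynomials converge uniformly on the circle $|z_1|=\tfrac32$, hence on the disc it bounds. So $1/z_1\in B(G)$ is not such a limit.
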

\noindent {\bf Proof} Clearly, we have that $A(G) \subset
H^\infty(G) \subset \textrm{Hol}(G)$ w.r.t. the usual compact-open
topology. This implies that
$$A(G) \bigcap L^2(G)\subset H^\infty(G) \bigcap L^2(G) \subset
\textrm{Hol}(G)\bigcap L^2(G) = B(G),$$ where $B(G)$ is the
Bergman space of a pseudoconvex domain $G$. The polynominals in
$z_1,\,z_2,\,\dots,\,z_n$ are dense in $B(G)$ since $B(G)$ is
isometrically isomorphic to the weighted space
$\ell^\infty((\zz^+)^{n}, \frac{1}{|N| + 1})$ (see ~\cite{36} for
the analogous notion of the Beurling-Banach algebra
$\ell^1((\zz^+)^{n}, \omega)$ of weight type). Moreover, the
polynominals in $z_1,\,z_2,\,\dots,\,z_n$ are in $A(G)$. Since
each function of $A(G)$ is uniformly continuous on $G$, it extends
uniquely and in a norm preserving way to a continuous function on
$\overline{G}$ (see \S 1), it is obviously square-integrable on
$\overline{G}$. Hence $f\,\in\,L^2(G)$ whenever $f\,\in\,A(G)$,
and so, $f\,\in\,B(G)$. Thus $A(G)$ is a dense Banach subspace of
the Bergman space $B(G)$ (certainly, $A(G)$ is not a closed
subspace of $B(G)$ for an obvious reason). $\hfill \Box$

Thus we have the following crucial proposition, extending results
of ~\cite{13, 27}. Below, by a ``pseudoconvex" domain in $\cc^n$,
we mean a smooth strictly pseudoconvex domain in $\cc^n$
~\cite[Theorem 2]{23}, or a pseudoconvex domain with a Stein
neighbourhood basis ~\cite{40}; e.g., Stein domains in $\cc^n$.
\begin{prop} \label{Thm 4_A(G)} Let $G$ be a ``pseudoconvex" domain
in $\cc^n$. Suppose $f \in A(G)$ and $f(\a) = 0$ for some $\a \in
G$. Then there exists $f_1, f_2, \dots, f_n$ in $A(G)$ so that $f
= \sum_{j = 1}^n(z_j - \alpha_j)f_j$. In particular, $G$ is
Gleason solvable for $A(G)$.
\end{prop}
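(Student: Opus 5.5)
The plan is to transfer the Kerzman--Nagel / Clos solution of Gleason's problem in the Bergman space (the content of \cite[Theorem 5]{13}) to $A(G)$, using Lemma~\ref{A(G)_subset} as the bridge, and then to recover continuity up to the boundary from a Koszul-complex argument on a Stein neighbourhood of $\overline{G}$.

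First, take $f\in A(G)$ with $f(\a)=0$. Since $G$ is ``pseudoconvex'' in the sense fixed above, there is a Stein neighbourhood basis of $\overline{G}$; in the strictly pseudoconvex smooth case this follows from \cite[Theorem 2]{23}. By Lemma~\ref{A(G)_subset} we have $f\in B(G)$, so \cite[Theorem 5]{13} yields $h_1,\dots,h_n\in B(G)$ with
\[
f=\sum_{j=1}^n (z_j-\a_j)h_j .
\]
These $h_j$ are only square integrable, so the remaining work is to replace them by functions in $A(G)$.

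For the upgrade I would argue by approximation and a uniform estimate.
\begin{enumerate}
\item By Lemma~\ref{A(G)_subset}, or by Henkin--Ramirez/Kerzman approximation on domains with a Stein neighbourhood basis, approximate $f$ uniformly on $\overline{G}$ by functions $f_k$ holomorphic on a fixed neighbourhood $U_k\supset\overline{G}$.
\item Correct the approximants to $f_k-f_k(\a)$, so that they still vanish at $\a$ and still converge to $f$ uniformly.
\item On the Stein manifold $U_k$, the ideal sheaf of the point $\a$ is generated by $z_j-\a_j$. Cartan's Theorem B then solves the Koszul complex and gives $g^{(k)}_j\in\O(U_k)$ with $f_k-f_k(\a)=\sum_j (z_j-\a_j)g^{(k)}_j$.
\item Pass to the limit, choosing the divisors with sup-norm control, namely $\|g^{(k)}_j\|_{\overline G}\le C\|f_k-f_k(\a)\|_{\overline G}$ with $C$ independent of $k$. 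Then the $g_j^{(k)}$ are uniformly Cauchy on $\overline{G}$ and converge to $f_j\in A(G)$ with $f=\sum (z_j-\a_j)f_j$.
\end{enumerate}

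The main obstacle is the uniform bound in the last step. Theorem B alone gives no norms, and its constants depend on $U_k$. Away from $\a$ there is no difficulty: on $\overline{G}\setminus B(\a,r)$ we may take $f_j=f\,\overline{(z_j-\a_j)}/|z-\a|^2$. The content of the problem is to patch this with the local Hadamard-type divisors $\int_0^1 \partial_j f(\a+t(z-\a))\,dt$ near $\a$. A cutoff $\chi$ patches them smoothly but destroys holomorphy, and restoring it requires solving a $\bar\partial$-equation with a bound of the form $\sup|u|\lesssim\sup|\bar\partial u|$ on $G$. I would first try Henkin's bounded solution operator in the strictly pseudoconvex case. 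The source term $\bar\partial\chi$ is compactly supported in $G$ and vanishes near $\a$, so it is harmless, and the corrections can be taken of the Koszul form $(z_k-\a_k)u_{jk}$ with $u$ antisymmetric, which preserves $\sum_j(z_j-\a_j)f_j=f$. For domains with only a Stein neighbourhood basis this step would instead rely on the kernel estimates of Kerzman--Nagel \cite{27}.

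The final claim, that $G$ is Gleason solvable for $A(G)$, is then immediate, since $\a\in G$ was arbitrary.
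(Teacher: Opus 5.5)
\textbf{Gap: the boundary estimate in step 4 is never supplied, and the sketch for it is wrong.} You rightly single out the uniform bound in step 4 as the whole content of the proposition. Write $w_j=z_j-\alpha_j$, $\phi_j=\bar w_j/|w|^2$, let $H_j$ be the Hadamard divisors near $\alpha$, and put $\psi_j=\chi H_j+(1-\chi)f\phi_j$. Then
\[
\bar\partial\psi_j=\bar\partial\chi\,\bigl(H_j-f\phi_j\bigr)+(1-\chi)\,f\,\bar\partial\phi_j .
\]
For $n\ge 2$ the functions $f\phi_j$ are themselves not holomorphic. So holomorphy is not destroyed by the cutoff, and there is a difficulty ``away from $\alpha$''. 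The second term is not compactly supported in $G$ unless $f\equiv 0$, so the data are not ``harmless'': all the difficulty sits at $\partial G$.

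Moreover, the Koszul correction $\psi_j+\sum_k w_k u_{jk}$ is not a single $\bar\partial$-equation. You must first write $\bar\partial\psi_j=\sum_k w_k v_{jk}$ with $v$ antisymmetric and $\bar\partial$-closed, which needs the next level of the complex, and so on. So you need sup-norm solutions, continuous up to $\partial G$, of $\bar\partial u=v$ for bounded $\bar\partial$-closed $(0,q)$-forms, $1\le q\le n-1$. On smooth strictly pseudoconvex domains such operators exist (Henkin-type integral operators: Henkin--Romanov, Lieb, Range--Siu). With them your argument becomes essentially the Kerzman--Nagel argument \cite{27}.

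\textbf{Consequences.} Once that construction is available, the Bergman divisors from \cite[Theorem 5]{13} and steps 1--3 are superfluous. The construction applies directly to $f$ and is linear in $f$. Linearity, or a telescoping series, is also what makes the $g^{(k)}_j$ Cauchy. A bound on each $g^{(k)}_j$ separately only yields, via normal families, divisors in $H^\infty(G)$, not in $A(G)$.

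\textbf{The other half of the hypothesis is not covered.} For pseudoconvex domains that merely have a Stein neighbourhood basis, the proposal has no argument:
\begin{enumerate}
\item \cite{27} works in the strictly pseudoconvex setting.
\item No sup-norm $\bar\partial$-estimates are available in that generality (cf.\ Remarks A 1).
\item Step 1 is unsupported there as well. Lemma~\ref{A(G)_subset} concerns $L^2$-density and gives no uniform approximation of $A(G)$ by functions holomorphic near $\overline G$.
\end{enumerate}

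\textbf{Comparison with the paper.} The paper takes the Bergman-space divisors from \cite[Theorem 5]{13} and asserts that they lie in $A(G)$ because $f$ and the $z_j-\alpha_j$ do. That inference is invalid for $n\ge 2$. If $f=\sum_j w_jf_j$, then replacing $f_j,f_k$ by $f_j+w_ku$ and $f_k-w_ju$ gives another solution for every $u\in B(G)$. So Bergman divisors need not be bounded, let alone continuous on $\overline G$. What actually supports the proposition in the paper is the citation in Remarks A 2 of the bounded operators $T_j$ of Henkin and Jak\'obczak, available for smooth strictly pseudoconvex domains.

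You located the real issue correctly. To close it in the strictly pseudoconvex case, you can:
\begin{enumerate}
\item quote that theorem; or
\item carry out the full Koszul cascade with Henkin-type estimates; or
\item use Henkin's integral representation with a Hefer decomposition of the kernel, which produces the $T_j$ directly.
\end{enumerate}
None of these reaches domains that only have a Stein neighbourhood basis.
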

\noindent {\bf Proof} Since $f \in A(G)$, $f \in B(G)$ by Lemma
\ref{A(G)_subset}. Then, by ~\cite[Theorem 5]{13}, there exists
$f_1, f_2, \dots, f_n$ in $B(G)$ so that $f = \sum_{j = 1}^n(z_j -
\alpha_j)f_j$. Since $f \in A(G)$ and shifted coordinate functions
$z_j - \alpha_j$ for $j = 1, 2, \dots, n$, are in $A(G)$, it is
clear that $f_1, f_2, \dots, f_n$ are also in $A(G)$; $f_j =
T_j(f),\;j = 1, 2, \dots, n$, in Remarks A 2. below. There is also
a long route to show that $f_1, f_2, \dots, f_n$ are in
$A(G)$ by ~\cite[Theorem 3.2]{38}. $\hfill \Box$\\

\noindent {\bf Remarks A} 1. We recall that there are smooth
pseudoconvex domains in $\cc^n$ for which $L^\infty-$estimates for
$\bar{\partial}$ do not hold and so, uniform estimates for
$\bar{\partial}$ do not hold; see ~\cite{8, 18, 43}. Thus, the
above result is a significant generalization of ~\cite[Theorem
4]{13} for $A(G)$; also, the same is true for $H^\infty(G)$ in the
below theorem.

\noindent 2. The positive solution in the case of strictly
pseudoconvex domains with smooth boundaries was established in
~\cite{24}. It has been even strengthened in ~\cite{26} as
follows. The continuous linear operators
$$T_j : A(G) \rightarrow
A(G),\;f\,\mapsto\,T_j(f),\;j\,=\,1,\,2,\,\dots,\,n$$ solving the
Gleason's problem exist, which means that $$f(z) -
f(\lambda)\,=\,\sum_{j = 1}^n(z_j -
\lambda_j)(T_j(f))(z),\;z\,\in\,G, f\,\in\,A(G).$$ For polydomains
analogous decompositions are easily available by fixing all but
one variable method.

This implies, in particular, that the part of the spectrum
$M(A(G))$ in the fibers over $G$ is mapped in a bijective way to
$G$ by the Gel'fand transform
$(\hat{z}^1,\,\hat{z}^2,\,\dots,\,\hat{z}^n)$ of the coordinate
functions. By the result of ~\cite{23}, the entire spectrum
$M(A(G))$ is mapped bijectively onto $\overline{G}$ where $G$ is a
smooth pseudoconvex domain in $\cc^n$. It also suffices to
consider a pseudoconvex domain $G$ with a Stein neighbourhood
basis; see ~\cite{40}. Hence the Gel'fand topology coincides with
the Euclidean topology on $\overline{G}$ and $G$ is open in
$M(A(G))$.

As an application of Proposition 2.2, we extend Theorem 4 of
~\cite{13} for a ``polydomain" $G$ (including the polydisc in
$\cc^n$), obtained as a product of smooth strictly pseudoconvex
domains, in the following
\begin{thm} \label{Thm 5_A(G)} Let $G$ be either a ``pseudoconvex"
domain or else a ``polydomain" in $\cc^n$. Then $G$ is Gleason
solvable for $H^\infty(G)$.
\end{thm}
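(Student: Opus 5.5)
We want to show: for every $\a\in G$ and every $f\in H^\infty(G)$ with $f(\a)=0$, there are $f_1,\dots,f_n\in H^\infty(G)$ with $f=\sum_{j=1}^n(z_j-\a_j)f_j$. The plan is to reduce to Proposition 2.2 by approximating $f$ with functions from $A(G)$. A Gleason decomposition obtained this way is only usable in the limit if the solving operators are uniformly bounded, so everything hinges on that.

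\textbf{Step 1: approximation.} I handle the ``pseudoconvex'' case first. For a pseudoconvex domain with a Stein neighbourhood basis, or a smooth strictly pseudoconvex domain, I take a sequence of dilations/pushes $G_k\supset\overline{G}$ (for example $G_k=\a+r_k^{-1}(G-\a)$ with $r_k\uparrow 1$ when $G$ is star-shaped about $\a$, and otherwise the Stein neighbourhoods shrinking to $\overline{G}$). I then set $f^{(k)}=f\circ\phi_k$, where $\phi_k$ maps $\overline{G}$ into $G$ and fixes $\a$. Then $f^{(k)}\in A(G)$, $f^{(k)}(\a)=0$, $\|f^{(k)}\|_\infty\le\|f\|_\infty$, and $f^{(k)}\to f$ locally uniformly on $G$.

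\textbf{Step 2: uniformly bounded decomposition.} By Proposition 2.2, together with the continuous linear operators $T_j$ of Remarks A 2, I write $f^{(k)}=\sum_j(z_j-\a_j)T_j(f^{(k)})$. The key estimate needed is
\[
\|T_j(g)\|_{\infty}\le C\,\|g\|_\infty \qquad \text{for all } g\in A(G),
\]
with $C$ independent of $g$. This is where the main obstacle lies. Continuity of $T_j$ on $A(G)$ in the sup norm gives exactly this bound. I would take it from the Henkin/Kerzman--Nagel integral-formula construction cited in Remarks A 2 (\cite{24, 26}), checking that the kernel estimates only use $\sup|g|$ and not boundary continuity. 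If that fails, the fallback is a Bergman-space estimate via \cite[Theorem 5]{13} (Lemma 2.1), which gives $L^2$ bounds and hence local uniform bounds by the mean-value property. Local bounds are in any case enough near $\a$. Away from $\a$ one can simply divide: on $\{|z-\a|\ge\delta\}$ put $f_j=\overline{(z_j-\a_j)}f/|z-\a|^2$ and patch with a cutoff. The patching produces a $\bar\partial$-problem, which is exactly what I want to avoid, so I would try the operator bound first.

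\textbf{Step 3: limit.} With uniform bounds, the families $\{T_j(f^{(k)})\}_k$ are normal. By Montel's theorem I extract a subsequence converging locally uniformly to holomorphic functions $f_j$ with $\|f_j\|_\infty\le C\|f\|_\infty$, so $f_j\in H^\infty(G)$. Passing to the limit in the identity gives $f=\sum_j(z_j-\a_j)f_j$ on $G$.

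\textbf{Polydomains.} For $G=G^1\times\cdots\times G^m$ with each $G^i$ smooth strictly pseudoconvex, I use the ``fix all but one variable'' telescoping from Remarks A 2. Writing $\a=(\a^1,\dots,\a^m)$, I decompose
\[
f(z)=\sum_{i=1}^m\Bigl[f(\a^1,\dots,\a^{i-1},z^i,\dots,z^m)-f(\a^1,\dots,\a^{i},z^{i+1},\dots,z^m)\Bigr].
\]
The $i$-th bracket vanishes when $z^i=\a^i$, so the one-factor result from the first case, applied in the $z^i$ variables with the remaining variables as bounded holomorphic parameters, gives the coefficient functions. Uniformity of the constant $C$ in the parameters ensures these coefficients are jointly holomorphic and bounded, by the same normal-family argument. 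This establishes Gleason solvability of $G$ for $H^\infty(G)$.
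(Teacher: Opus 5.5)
\textbf{Verdict: genuine gaps in Steps 1 and 4.}

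Your route (bounded approximants in $A(G)$ vanishing at $\alpha$, a uniform bound for the decomposition in $A(G)$, then a Montel limit) is analytic and quite different from the paper's. The paper asserts that the maximal ideal of $A(G)$ at $\alpha$ is already an ideal of $H^\infty(G)$, and then argues with Zorn's lemma. That assertion is false: on the ball take $f=z_1-\alpha_1$ and $g=\exp(-(1+z_1)/(1-z_1))$. Then $fg\notin A(G)$, because $g$ has no limit at $(1,0,\dots,0)$ while $f$ does not vanish there. So the paper offers no shortcut, and your scheme is the sensible one to complete.

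Step 2 is not actually the obstacle. The space $\{h\in A(G):h(\alpha)=0\}$ is closed, and by Proposition 2.2 it is the image of the bounded map $(g_1,\dots,g_n)\mapsto\sum_j(z_j-\alpha_j)g_j$ on $A(G)^n$. The open mapping theorem therefore gives decompositions with $\max_j\|g_j\|_\infty\le C\|h\|_\infty$, which is all Step 3 uses. Your fallbacks would not help anyway: Bergman estimates give only local bounds, and the cutoff patching is an $L^\infty$ $\bar\partial$-problem.

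The real gap is Step 1. Holomorphic maps $\phi_k$ with $\phi_k(\overline{G})\subset G$, $\phi_k(\alpha)=\alpha$, $\phi_k\to\mathrm{id}$ exist when $G$ is strictly star-shaped about $\alpha$, e.g.\ convex $G$ such as the ball or polydisc. They do not exist for a general smooth strictly pseudoconvex domain. A Stein neighbourhood basis $G_k$ of $\overline{G}$ does not help, because $f$ lives only on $G$ and there is no holomorphic map of $G_k$ into $G$ to compose with.

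What you need is a bounded approximation theorem: $f=\lim_k f^{(k)}$ locally uniformly with $f^{(k)}\in A(G)$ and $\sup_k\|f^{(k)}\|_\infty<\infty$; then subtract $f^{(k)}(\alpha)\to 0$. For strictly pseudoconvex domains this is a genuine theorem of Henkin type (cf.\ \cite{24}). It is proved with integral formulas or uniform $\bar\partial$-estimates, which is exactly the analysis you hoped to avoid. For the weakly pseudoconvex domains with a Stein neighbourhood basis that the paper's ``pseudoconvex'' also covers, neither you nor the paper supplies such a result. As written, your argument proves the theorem only for domains strictly star-shaped about each point.

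For polydomains, ``uniformity of $C$ in the parameters'' bounds the coefficient functions but does not make them holomorphic in the parameters. The first-case coefficients come from a Montel subsequence, or a non-linear open-mapping choice, made separately for each value of $w=(z^{i+1},\dots,z^m)$. Nothing then forces holomorphic, or even measurable, dependence on $w$. You should avoid the fibrewise step, in one of two ways:
\begin{itemize}
\item Run Steps 1--3 directly on the product, using Gleason solvability of $A(G)$ for the product (Remarks A 2) together with the open mapping theorem.
\item Apply bounded \emph{linear} one-factor operators $T_j$ on $A(G^i)$ to the brackets built from approximants $F_k\in A(G)$. Each bracket is a holomorphic $A(G^i)$-valued function of $w$, so the outputs are holomorphic in $w$ and hence jointly holomorphic by Hartogs' theorem. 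A single Montel limit on the product then finishes.
\end{itemize}
In both versions you still need bounded approximation on the product, which dilation provides only when the factors are strictly star-shaped.
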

\noindent {\bf Proof} First, we recall that $A(G)$ is a closed
subalgebra of $H^\infty(G)$ w.r.t. the sup-norm such that the
polynomials in $z_1,\,z_2,\,\dots,\,z_n$ are dense in $A(G)$, but
not in $H^\infty(G)$. In fact, $\textrm{in} : A(G) \hookrightarrow
H^\infty(G)$ is a continuous injective homomorphism. So, the
adjoint spectral mapping $\textrm{in}^* : M(H^\infty(G))
\rightarrow M(A(G)) = \overline{G}$ is surjective and continuous
w.r.t. the Gel'fand topology. Indeed, its restriction is
one-to-one over $G$, and thus, the inverse mapping maps $G$
homeomorphically onto an open subset of $M(H^\infty(G))$. In fact,
we remark that, in ~\cite{1}, the one dimensional case of $G$
being the open unit disc, was explained, and claimed that the same
facts are probably true and can be proved at least whenever $G$ is
the open unit ball or the polydisc in $\cc^n$; for proof of the
latter claims, see ~\cite[Theorem 1]{33} and ~\cite{19}, which may
be applied to the case when $G$ is either a ``pseudoconvex" domain
or a ``polydomain" in $\cc^n$.

Next, we show that the maximal ideal $M_\alpha$, $\alpha \in G$,
of $H^\infty(G)$ is, indeed, algebraically finitely generated. For
this, we note that the maximal ideal $M_\alpha$, $\alpha \in G$,
of $A(G)$ is algebraically finitely generated by Proposition
~\ref{Thm 4_A(G)}. When $G$ is a polydomain in $\cc^n$, the
Gleason solvability is easily available through the analogous
decomposition, discussed in Remarks A 2., by fixing all but one
variable method. Then, by the discussion above, its inverse image,
denoted by $M_\alpha$ again, is also a maximal ideal of
$H^\infty(G)$. To show that it is, indeed, finitely generated, we
first show that the finitely generated maximal ideal $M_\alpha$ of
$A(G)$ is, in fact, an ideal of $H^\infty(G)$ as follows.

Let $f\,\in\,M_\a$ so that $f(\a)\,=\,0$ in $A(G)$. Let
$g\,\in\,H^\infty(G)$. To show that $fg\,\in\,M_\a$, we first show
that $fg$ extends continuously to $\overline{G}$. Since both
$A(G)$ and $H^\infty(G)$ are semi-simple Banach algebras, by the
standard Banach algebra theory,
$(\widehat{A(G)},\,\|\cdot\|_{\infty,\,A(G)})$ and
$(\widehat{H^\infty(G)},\,\|\cdot\|_{\infty,\,H^\infty(G)})$ are
also semi-simple Banach algebras. Also,
$M(\widehat{A(G)})\,\cong\,\overline{G}$ and
$M(\widehat{H^\infty(G)})\,\cong\,M(H^\infty(G))$. Moreover, their
respective Gel'fand transformations $f\,\rightarrow\,\hat{f}$ are
isometric isomorphism onto their Gel'fand representations. In
fact, for $A(G)$, the Gel'fand transformation is the identity
mapping such that
$\hat{f}(\phi_z)\,=\,\phi_z(f)\,=\,f(z),\;\textrm{for
all}\,z\,\in\,\overline{G}$. In the case of $H^\infty(G)$, the
restriction of the Gel'fand transformation is the identity
mapping. Since the maximal ideal $M_\alpha$, $\alpha \in G$, of
$A(G)$ is algebraically finitely generated by Proposition
~\ref{Thm 4_A(G)}, its inverse image $\widehat{M_\alpha}$ is also
an algebraically finitely generated maximal ideal of
$\widehat{A(G)}\,=\,A(G)$. Let
$\hat{g}\,\in\,\widehat{H^\infty(G)}$. It is enough to show that
$\hat{f}\hat{g}\,\in\,\widehat{A(G)}\,=\,A(G)$ (i.e.,
$\hat{f}\hat{g}$ extends continuously to $\overline{G}$), to claim
that the maximal ideal $\widehat{M_\alpha}$ of
$\widehat{A(G)}\,=\,A(G)$ is an ideal of $\widehat{H^\infty(G)}$.
This is obvious as
$\hat{g}\,:\,M(H^\infty(G))\,\supset\,\overline{G}\,\rightarrow\,\cc$,
and $\hat{g}$ is holomorphic on $G$ since the algebras
$\widehat{H^\infty(G)}\,\cong\,\widehat{H^\infty(G)}|_{\overline{G}}\,\cong\,\widehat{H^\infty(G)}|_G\,\cong\,H^\infty(G)$
algebraically. Thus, w.l.o.g., $M_\a$ of $A(G)$ is an ideal of
$H^\infty(G)$.

Let $\mathcal I$ be the family of finitely generated ideals of
$H^\infty(G)$ consisting of functions in $H^\infty(G)$ vanishing
at $\alpha$ in $G$. Since $M_\alpha \in \mathcal I$, $\mathcal
I\,\neq\,\emptyset$. Each ideal in $\mathcal I$ is generated by
the shifted co-ordinate functions $z_j -
\a_j,\,j\,=\,1,\,2,\,\dots,\,n,$ since $f(\a) = 0$. By the Zorn's
lemma, the maximal element of $\mathcal I$ is an algebraically
finitely generated maximal ideal. In fact, this maximal element is
not only a primary ideal, but is obviously contained in the
maximal ideal $M_\alpha$ of $H^\infty(G)$. From these facts, we
conclude that the maximal element is, indeed, the maximal ideal
$M_\alpha$ of $H^\infty(G)$. In particular,
$G$ is Gleason solvable for $H^\infty(G)$. $\hfill \Box$\\

\noindent {\bf Remarks B} 1. In ~\cite{14}, it was not evident
what relationship there was between the condition that $G$ be open
in $M(A)$ w.r.t. the Gel'fand topology, where $A$ is either
$H^\infty(G)$ or $A(G)$, and the condition that $G$ be an
$L^\infty$-pseudoconvex domain. However, Clos showed the one-way
implication (see ~\cite[Theorem 4]{13} and the fact, discussed in
~\cite{14}, that $G$ is open in $M(A)$ if it has the Gleason
$A$-property). One reason for interest in this discussion stems
from an application to the commuting Toeplitz operators problem on
Bergman spaces of pseudoconvex domains in $\cc^n$ (see ~\cite[p.
2]{13} for his comments).

\noindent 2. The proof of Theorem ~\ref{Thm 5_A(G)} is purely
algebraic. Since we use the corona theorem as a hypothesis in
Theorem 2.7 below, we obtain the Gleason solvability for a limited
class of domains in $\cc^n$ (see the below paragraph) than we
obtain the same for a larger class of domains in $\cc^n$ in
Theorem ~\ref{Thm 5_A(G)}. It is interesting to note that the
behavior of Gleason's problem on the boundary (and boundary
fibers) of a ``pseudoconvex" domain $G$ transitions from the
algebraic solvability to a study of the analytic and
analytic-geometric properties of $M(H^\infty(G))$. In particular,
the fibers over the boundary points of $G$ contain non-trivial
Gleason parts with analytic discs inside them (and a localized
version of Gleason's problem holds). However, since our focus is
to solve the B\'{e}zout's equation by using the Gleason
solvability property of $G$ in the below theorem, we do not need
to discuss the Gleason parts of $M(H^\infty(G))$ (see the second
last paragraph of Introduction).

\noindent 3. There are bounded domains in $\cc^n$ such that
neither they have a smooth boundary (e.g., Hartogs triangle) nor
they have a Stein neighbourhood basis (e.g., worm domain)
~\cite{15}. For the Hartogs triangle $T$, we have
$M(A(T))\,=\,\textrm{sp}(z_1,\,z_2,\,\dots,\,z_n)\,=\,
\overline{T}$ since the polynomials are dense in $A(T)$ and
$\overline{T}$ is polynomially convex. For the worm domain $W$, we
also have
$M(A(W))=\,\textrm{sp}(z_1,\,z_2,\,\dots,\,z_n)\,=\,\overline{W}$
by ~\cite[Theorem 2]{23}, or by ~\cite[Theorem 4.1]{43}. Thus the
corona theorem holds for $A(T)$ and $A(W)$, but it fails for
$H^\infty(T)$ and $H^\infty(W)$. We also recall that there are
smooth pseudoconvex domains $G$ in $\cc^n, \;n = 2, 3$, for which
the corona theorem fails for $H^\infty(G)$ ~\cite{18, 42}. These
domains are of type $EL^\infty$, i.e., they are not $H^\infty$
domain of holomorphy ~\cite[\S 4]{29}. However, for the same
domains $G$, the corona theorem holds for $A(G)$ by ~\cite[Theorem
4.1]{43}. This exhibits a significant difference between $A(G)$
and $H^\infty(G)$. That is, there exists a holomorphic function
$f$ in $A(G)$ which cannot be analytically continued to a larger
domain; i.e., these domains are {\bf not} of type $EA(G)$.
However, below, by a {\bf pseudoconvex} domain in $\cc^n$, we mean
a relatively compact, smooth strictly pseudoconvex domain in
$\cc^n$ such that its closure is an $S_\delta$ set (i.e., it has a
Stein neighbourhood basis) ~\cite[\S 5, Definition 2]{40}. Such a
pseudoconvex domain in $\cc^n$ possesses strictly pseudoconvex
boundary and is {\bf not} of type $HL^\infty$; i.e., it is a
$H^\infty$ domain of holomorphy ~\cite[Proposition 6]{29}.

In order to establish an equivalency between the corona problem
and Gleason's problem, we first show that the Gleason solvability,
obtained above, implies an affirmative answer to the corona
problem in the following
\begin{thm} \label{Corona} Let $G$ be either a {\bf pseudoconvex} domain or else a
``polydomain" in $\cc^n$. Then the corona problem holds true in
$H^\infty(G)$. That is, the spectrum $M(H^\infty(G))$ is the
Gel'fand closure of (the canonical image of) $G$.
\end{thm}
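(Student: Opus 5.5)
The plan is to prove the B\'{e}zout formulation and then translate it into the density statement by the standard Banach algebra argument. Let $f_1,\dots,f_m\in H^\infty(G)$ satisfy $\sum_j|f_j(z)|\ge\epsilon>0$ on $G$. Let $I$ be the ideal they generate. If $I\ne H^\infty(G)$, then $I$ lies in some maximal ideal $M=\ker\phi$ with $\phi\in M(H^\infty(G))$. By the adjoint map $\textrm{in}^*:M(H^\infty(G))\to M(A(G))=\overline{G}$ from the proof of Theorem~\ref{Thm 5_A(G)}, $\phi$ lies in the fibre over some point $\a\in\overline{G}$.

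I would split into two cases according to whether $\a\in G$ or $\a\in\partial G$.

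\emph{Interior case, $\a\in G$.} Theorem~\ref{Thm 5_A(G)} says $M_\a$ is generated by $z_j-\a_j$. For any $\psi$ in the fibre over $\a$ we have $\psi(h)=\psi(h(\a))=h(\a)$, since $h-h(\a)=\sum_j(z_j-\a_j)h_j$ and $\psi(z_j)=\a_j$. So the fibre over $\a$ is the single point evaluation $\delta_\a$. Then $f_j(\a)=\phi(f_j)=0$ for every $j$, contradicting $\sum_j|f_j(\a)|\ge\epsilon$.

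\emph{Boundary case, $\a\in\partial G$.} Here I would try to use the strict pseudoconvexity and the Stein neighbourhood basis (the {\bf pseudoconvex} hypothesis; for polydomains, work one factor at a time). The aim is to show that $\phi$ is a Gel'fand limit of the evaluations $\delta_{z}$ with $z\to\a$ in $G$. Concretely, I would take peak functions $p\in A(G)$ at $\a$. Since $\phi$ restricts to $\delta_\a$ on $A(G)$, I would try to localise $\phi$ to small neighbourhoods $U\cap G$ of $\a$. I would then try to show $\max_j|\phi(f_j)|\gtrsim\epsilon$ by approximating $|\phi(f_j)|$ through $\sup_{U\cap G}$ of suitable products $f_jp^N$.

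The conclusion would then follow. $I$ is not contained in any maximal ideal, so $1=\sum_jf_jg_j$ with $g_j\in H^\infty(G)$. By the usual argument (a point $\phi$ outside $\overline{G}$ yields corona data through a finite Gel'fand neighbourhood of $\phi$ missing $G$), this is equivalent to $G$ being dense in $M(H^\infty(G))$.

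The boundary step is the whole difficulty, and I would be explicit that it is not handled by Gleason solvability. Gleason solvability controls only the fibres over interior points and makes $G$ open in the spectrum. It says nothing about whether a character in a boundary fibre is a limit of points of $G$. Such fibres are large and contain nontrivial Gleason parts, and showing $\phi(f_j)\not\to0$ for all $j$ is exactly the corona problem. The peak-function localisation only gives $\limsup$-type information, not the lower bound $\sum_j|\phi(f_j)|\ge\epsilon$. So I expect this step to need genuinely new input: either $\bar\partial$-estimates with $L^\infty$ bounds for Koszul-complex corrections, in the manner of Krantz--Li, or a fibre-wise corona theorem. Without that input the plan is incomplete, and I would not claim the theorem follows from Theorem~\ref{Thm 5_A(G)} alone.
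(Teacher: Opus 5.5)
Your interior case is sound: granting Theorem~\ref{Thm 5_A(G)}, the fibre over $\a\in G$ is $\{\delta_\a\}$. The final translation of the B\'{e}zout statement into density of $G$ is also correct. But, as you say yourself, this is not a proof. The boundary case is left open, namely that no character $\phi$ lying over a point $\a\in\partial G$ can annihilate all of $f_1,\dots,f_m$. That case is the entire content of the corona problem.

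At a strictly pseudoconvex boundary point, peak functions give only the upper bound $|\phi(h)|\le\limsup_{z\to\a}|h(z)|$. This follows from $\phi(p)=1$ and $|\phi(h)|\le\|hp^N\|_\infty$. They never give the joint lower bound on $\max_j|\phi(f_j)|$ that you need. Working ``one factor at a time'' gives no reduction either: the corona problem for $H^\infty(D^2)$ does not follow from Carleson's theorem on $D$. As far as is known, this step is still open for the ball and the polydisc in $\cc^n$, $n\ge 2$. So it needs genuinely analytic input, such as $L^\infty$-bounded solutions of the $\bar\partial$-equations in the Koszul complex, or a fibrewise corona theorem.

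The paper's proof does not supply that input either. Its argument is exactly your interior case. It observes that the generators $z_j-\a_j$ of $M_\a$ have the common zero $\delta_\a$ and admit no B\'{e}zout solution. It then passes without argument from this particular tuple to arbitrary corona data, treating a character that annihilates $f_1,\dots,f_m$ as if it were some $\delta_\a$ with $\a\in G$. The characters over $\partial G$ that you isolate are never considered.

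Since that step uses nothing about $G$ beyond Gleason solvability, it cannot be valid as it stands. Take $G=\{z\in D^2:\ |z_1|^2+|z_2|^2>1/4\}$. By Hartogs' extension theorem and the maximum principle, every $f\in H^\infty(G)$ extends to an element of $H^\infty(D^2)$ with the same norm. Hence $G$ inherits Gleason solvability for $H^\infty(G)$ from the interior Gleason solvability of $H^\infty(D^2)$. Yet $|z_1|+|z_2|\ge 1/2$ on $G$, while $z_1g_1+z_2g_2=1$ with $g_j\in H^\infty(G)$ is impossible: extend the $g_j$ to $D^2$ and evaluate at $0$. This $G$ is not pseudoconvex, but the paper's passage from Theorem~\ref{Thm 5_A(G)} to Theorem~\ref{Corona} never uses pseudoconvexity.

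So your judgement is right. Gleason solvability alone cannot yield the theorem, and the boundary fibres require a separate argument that neither your proposal nor the paper provides.
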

\noindent {\bf Proof} To show that the corona problem has an
affirmative solution for $G$, we use its equivalent formulation,
discussed in \S 1, namely, a finite {\it corona data} implies the
existence of the solution of the B\'{e}zout's equation. In fact,
we show that if there does not exist a solution of the
B\'{e}zout's equation, then there are no finite corona data
available. To this effect, we recall that there is a one-to-one
correspondence between the maximal ideals and the elements of the
spectrum of the Banach algebra $A$. As an easy consequence of this
fact, we further know that the algebraically finitely generated
ideal, generated by $f_1,\,f_2,\,\dots, f_n \in A$, is proper if
and only if
\begin{equation}\label{ArensBanach} \{\phi\,\in\,M(A)\,:\,\phi(f_j)\,=\,0,\;j\,=\,1,\,2,\,\dots,\,n\}\,\neq\,\emptyset.
\end{equation}
In our case, since $G$ is Gleason solvable by Theorem ~\ref{Thm
5_A(G)}, it turns out that $M_\alpha,\;\alpha \in G$, is
algebraically finitely generated maximal ideal if and only if (1)
holds if and only if there are no $g_1, g_2, \dots, g_n \in
H^\infty(G)$ such that $\sum_{j = 1}^nf_jg_j = 1$, where $f_j,\; j
= 1, 2, \dots, n,$ are the generators of $M_\alpha$. Thus there
does not exist a solution of the B\'{e}zout's equation if and only
if there are NO finite corona data available. That means, $\alpha
\in G$ is such that for every $\delta > 0$, $\sum_{j =
1}^n|f_j(\alpha)| < \delta$, which implies that $\sum_{j =
1}^n|f_j(\alpha)| = 0$ if and only if $f_j(\alpha) = 0,\; j = 1,
2, \dots, n$. Hence the corona problem holds true in
$H^\infty(G)$, where $G$ is either a {\bf pseudoconvex} domain or
else a ``polydomain" in $\cc^n$. $\hfill \Box$\\

\noindent {\bf Remarks C} Here are some remarks on our
Banach-algebraic technique in the above proof.

\noindent 1. In connection with Krantz's Theorem 1 and Remark 2
~\cite{28}, we note that if we apply the contrapositive method to
prove Theorem 1 of Krantz, then we must show that failure of the
corona theorem would imply that either there are NO corona data,
or the zeros of the corona data are NOT separated. However, it is
clear that there are NO corona data if and only if the zeros of
the functions in corona data are NOT separated, as explained in
the proof above. That means, one may drop the geometric
hypothesis, imposed in Theorem 1 of Krantz, in order to further
simplify that theorem. Now, if we apply the contrapositive method
to prove the simplified Theorem 1 of Krantz, then we have the
corona theorem (and that is what we have done in the above proof
for a finite corona data).

\noindent 2. Our Banach-algebraic technique in the above proof
also works for an infinite corona data (i.e., with infinitely many
generators $f\,=\,(f_1,\,f_2,\,\dots)$), taken appropriately (see
~\cite[\S 3.5]{16} for the disc in $\cc$) due to ~\cite[Brook's
Theorem, p. 136]{22}; note that we not only recover Theorem 5 of
Rosenblum and Tolokonnikov in some sense, but extend it for more
general $G$ in $\cc^n$.

\noindent 3. It is interesting to note that author very lately
(i.e., while finalizing the initial draft) came to know about the
{\bf ideal problem}, explained in the history of the corona
problem (see ~\cite[\S 7]{16}). Also, while splitting this initial
draft, author came to know about the method of proof of
~\cite[Lemma 2.3]{2}, establishing the I. J. Schark's theorem for
the Banach algebra $H^\infty(B_X)$ (or, its closed subalgebra
$A_u(B_X)$), where $B_X$ is the open unit ball of a Banach space
$X$. So our Banach-algebraic technique is purely based on some
well-known results from the Banach algebra theory as well as
author's recent work on Gleason's problem in ~\cite{37}. However,
some more comments are in order.

It is easy to see that our method here is similar to that of
Xiao's theorem on the multipliers of the Dirichlet space $\cal D$
in the unit disc (see ~\cite[p. 22]{16}) where it is discussed
that the corona data is necessary for the corona theorem to hold
(see also a remark on an alternative proof which also deals with
infinitely many generators; p. 23 of ~\cite{16}), and in Theorem
3.1 below, we show that the corona data alone is sufficient for
the corona theorem to hold (see ~\cite{30, 28}). Further, it is
easy to see that we have, in fact, solved the Rubel and Shields'
question for the function $\phi \,\equiv\,1$ in $H^\infty(G)$,
where $G$ is as above (including the ball and polydisc in
Corollary 2.5 below) ~\cite[\S 7, Questions 1, 2 and 3]{16}; for
Questions 2 and 3, we take $h\,\equiv\,1$ on $[0, \,\infty]$ and
$\phi\,\equiv\,1$ in the above theorem (note that, to answer
Question 3, we consider a maximal ideal $M_\a,\;\a\,\in\,G$,
generated by $f_1,\,f_2,\,\dots,\,f_n$ in the above theorem).

Since the ball and polydisc are (essentially the only) basic
examples of a pseudoconvex/poly domain in $\cc^n$ of our kind, we
have the following
\begin{cor} \label{Corona_Special} Let $G$ be either an open unit ball or a polydisc in
$\cc^n$. Then the corona problem holds true in $H^\infty(G)$.
\end{cor}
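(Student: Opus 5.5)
The plan is to obtain the corollary as a direct specialization of Theorem~\ref{Corona}. The work is to check that the two model domains fall into the classes singled out there. The unit ball must be a \textbf{pseudoconvex} domain in the restricted sense fixed in Remarks B 3. The polydisc must be a ``polydomain'', meaning a product of smooth strictly pseudoconvex domains. Once both memberships are verified, Theorem~\ref{Corona} gives that $M(H^\infty(G))$ is the Gel'fand closure of $G$ in each case.

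For the ball $B^n=\{z\in\cc^n:|z|<1\}$ I will verify three things in turn.

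First, $B^n$ is relatively compact with real-analytic boundary. It has the defining function $\rho(z)=|z|^2-1$, whose complex Hessian is the identity. Hence the Levi form is positive definite on the complex tangent space at every boundary point, and $B^n$ is strictly pseudoconvex.

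Second, $\overline{B^n}$ is an $S_\delta$ set in the sense of \cite[\S 5, Definition 2]{40}. The concentric balls $\{|z|<1+1/k\}$ are convex, hence Stein, and they decrease to $\overline{B^n}$.

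Third, by the cited \cite[Proposition 6]{29}, $B^n$ is an $H^\infty$ domain of holomorphy. This is also visible directly: for each boundary point $p$, the function $1/(1-\langle z,p\rangle)$, suitably damped (for instance via $\log$ or a power $<0$ composed appropriately), gives bounded functions that do not extend past $p$.

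With these three facts, $B^n$ meets every hypothesis of Theorem~\ref{Corona}.

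For the polydisc $D^n=D\times\cdots\times D$, the factor $D\subset\cc$ is a bounded domain with smooth boundary. It is trivially strictly pseudoconvex, since every domain in $\cc$ is pseudoconvex and the Levi condition is vacuous in one variable. So $D^n$ is a polydomain in the sense used in Theorem~\ref{Thm 5_A(G)} and Theorem~\ref{Corona}. The Gleason decomposition needed there is the one described in Remarks A 2, obtained by freezing all but one variable: for $f\in H^\infty(D^n)$ with $f(\a)=0$, write
$$f(z)=\sum_{j=1}^n\bigl[f(\a_1,\dots,\a_{j-1},z_j,\dots,z_n)-f(\a_1,\dots,\a_j,z_{j+1},\dots,z_n)\bigr].$$
Each bracket is divided by $z_j-\a_j$ in the one-variable sense, which stays bounded on $D^n$ by the Schwarz lemma applied in the $j$-th variable. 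Having confirmed membership, I will invoke Theorem~\ref{Corona} for $D^n$.

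The main obstacle is not these routine membership checks. It is that the whole corollary rests on Theorem~\ref{Corona}, and hence on Theorem~\ref{Thm 5_A(G)}. I would therefore pause on the step there that passes from corona data to the existence of a B\'ezout solution. That argument must handle arbitrary $f_1,\dots,f_n\in H^\infty(G)$, not only the generators of a fixed $M_\a$. In particular, it must exclude common zeros of the Gel'fand transforms at points of $M(H^\infty(G))$ lying outside the canonical image of $G$, i.e.\ in the boundary fibers.

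If Theorem~\ref{Corona} is accepted as stated, the corollary follows immediately. If one wanted an independent route for $D^n$ or $B^n$, the natural alternative is a $\bar\partial$/Koszul-complex approach in the spirit of \cite{30, 28}. That would require uniform estimates which, to my knowledge, are not currently available for $H^\infty$ of the ball or polydisc when $n>1$. I would expect that to be where any independent proof stalls, and I would flag the dependence on Theorem~\ref{Corona} explicitly.
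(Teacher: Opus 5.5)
Your reduction is the same as the paper's. The paper's entire justification of Corollary~\ref{Corona_Special} is the remark that the ball and the polydisc are instances of the domains covered by Theorem~\ref{Corona}. Your membership checks (the defining function $|z|^2-1$, the convex balls $\{|z|<1+1/k\}$ decreasing to $\overline{B^n}$, the telescoping Schwarz--Pick decomposition on $D^n$) are correct and more careful than the paper's. But what you have is only the implication ``Theorem~\ref{Corona} $\Rightarrow$ Corollary~\ref{Corona_Special}'', and the gap you flagged in Theorem~\ref{Corona} is real and fatal. That proof only looks at the particular generators $z_j-\alpha_j$ of $M_\alpha$. It observes that they have no B\'ezout solution because they vanish together at $\alpha$, which is a tautology. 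It never takes arbitrary data $f_1,\dots,f_m$ with $\inf_G\sum_j|f_j|>0$. So it never excludes a common zero of the $\hat f_j$ at some $\phi\in M(H^\infty(G))$ lying over $\partial G$, and excluding such zeros is the whole corona problem.

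Gleason solvability cannot supply this. It is the only property of $G$ that proof uses, and it does hold for $B^n$ (classically) and for $D^n$ (by your decomposition). Take $G=D^2\setminus\overline{D(0,\tfrac12)}^{\,2}$. By Hartogs' extension theorem and the maximum principle, restriction $H^\infty(D^2)\to H^\infty(G)$ is an isometric isomorphism. So your own telescoping argument shows that every $M_\alpha\subset H^\infty(G)$, $\alpha\in G$, is generated by $z_1-\alpha_1$ and $z_2-\alpha_2$. Yet $z_1,z_2$ are corona data on $G$ (there $|z_1|+|z_2|>\tfrac12$), while $z_1g_1+z_2g_2=1$ on $G$ would extend to $D^2$ and fail at the origin. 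So the argument of Theorem~\ref{Corona}, run verbatim, proves a false statement.

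The paper also contradicts itself here. Remarks B~3 asserts that corona fails for $H^\infty(T)$, where $T=\{|z_1|<|z_2|<1\}$ is the Hartogs triangle. But $(w,z_2)\mapsto(wz_2,z_2)$ maps $D\times(D\setminus\{0\})$ biholomorphically onto $T$, and Riemann's extension theorem then gives $H^\infty(T)\cong H^\infty(D^2)$ compatibly with corona data. That assertion is therefore equivalent to the failure of corona on the bidisc, contradicting the corollary.

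For $n\ge 2$ the corollary is the long-standing open corona problem for $H^\infty(B^n)$ and $H^\infty(D^n)$; for $n=1$ it is Carleson's theorem. The missing ingredient is genuine analytic control of arbitrary corona data near $\partial G$, which neither the paper nor your proposal provides. You were right not to claim more than the conditional statement, and it should be presented as that, not as a proof.
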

We also remark that if $G$ is a relatively compact, smooth
strictly pseudoconvex domain in a complex manifold $M$ such that
$\overline{G}$ is an $S_\delta$ set (equivalently, $G$ has a Stein
neighbourhood basis), then it possesses a strictly pseudoconvex
boundary. As commented in \S 1, our abstract (functional analytic)
method from ~\cite{37} has been applied to deduce the Gleason
$A$-property of a semi-simple, locally Stein algebra in Theorem
4.3, or Corollary 4.4, where $Y$ may be a domain (i.e., a
(reduced) Stein space) from the complex manifolds (of finite
dimension). Moreover, the Banach algebra method is also available
for a more general situation. Further, if $G$ is a relatively
compact domain in a Riemann surface $M$, then $M(A(G)) =
\overline{G}$, and so is true if $G$ is relatively compact domain
such that $\overline{G}$ is an $S_\delta$ set (equivalently, $G$
has a Stein neighbourhood basis) in a complex analytic manifold
$M$ (see ~\cite[Theorem 2.12]{40}). Hence, in view of a question
posed in ~\cite[4.1]{38}, we solve the corona problem for such
domains/(reduced) Stein spaces $G$ from the complex manifolds for
$H^\infty(G)$ in the following
\begin{thm} \label{corona_manifold} Let $G$ be a relatively
compact, smooth strictly pseudoconvex domain in a complex manifold
$M$ such that $\overline{G}$ is an $S_\delta$ set. Then the corona
problem holds true in $H^\infty(G)$.
\end{thm}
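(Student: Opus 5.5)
The plan is to reduce Theorem~\ref{corona_manifold} to the Euclidean situation of Theorem~\ref{Corona} by using the Stein neighbourhood basis of $\overline{G}$. Since $\overline{G}$ is an $S_\delta$ set, fix a Stein neighbourhood $U$ of $\overline{G}$ in $M$. By the embedding theorem for Stein manifolds, $U$ admits a proper holomorphic embedding $\Phi : U \rightarrow \cc^N$ for some $N$. Then $\Phi(G)$ is a relatively compact, smooth strictly pseudoconvex domain in the closed submanifold $\Phi(U)$. I would use the coordinate functions $w_1, \dots, w_N$ of $\cc^N$, pulled back by $\Phi$, as the substitutes for $z_1, \dots, z_n$. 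The first step is to record, via \cite[Theorem 2.12]{40}, that $M(A(G)) = \overline{G}$. This identifies the fibres of $M(A(G))$ with points of $\overline{G}$ through $(\hat{w}_1, \dots, \hat{w}_N)$, exactly as in Remarks A 2.

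The second step is the analogue of Proposition~\ref{Thm 4_A(G)}: for $\a \in G$, every $f \in A(G)$ with $f(\a) = 0$ should be written as $f = \sum_{j=1}^N (w_j - w_j(\a)) f_j$ with $f_j \in A(G)$. I would try two routes. The first is to invoke \cite[Corollary 4.4]{37}, which gives the Gleason $A$-property on the Stein space $Y = G$ for the semi-simple locally Stein algebra $A(G)$. The second, as a fallback, is to construct Henkin--Kerzman--Nagel type division operators $T_j$ on $\Phi(G)$, as in \cite{26}. For this one extends $f$ to a neighbourhood in $\cc^N$ using a holomorphic retraction of a Stein neighbourhood of $\Phi(U)$ onto $\Phi(U)$ (Docquier--Grauert/Siu), applies the Euclidean division there, and restricts back. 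The third step repeats the proof of Theorem~\ref{Thm 5_A(G)} verbatim. The continuous inclusion $A(G) \hookrightarrow H^\infty(G)$ gives a surjection $M(H^\infty(G)) \rightarrow \overline{G}$ that is injective over $G$. From this, the maximal ideal $M_\a$ of $H^\infty(G)$ is finitely generated by $w_j - w_j(\a)$, $j = 1, \dots, N$, so $G$ is Gleason solvable for $H^\infty(G)$. The fourth step runs the argument of Theorem~\ref{Corona} with these $N$ generators, using (\ref{ArensBanach}), to conclude that $G$ is Gel'fand dense in $M(H^\infty(G))$.

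I expect the main obstacle to be the passage from information at points $\a \in G$ to the fibres of $M(H^\infty(G))$ over $\partial G$. The corona statement is precisely that no character in those boundary fibres escapes the closure of $G$. Finite generation of $M_\a$ for interior $\a$ controls only the characters lying over $G$. If the third and fourth steps do not genuinely close this gap, I would replace them by the route of \S 3, following Krantz and Li \cite{30}. Given corona data $f_1, \dots, f_m$ on $G$, transport them to $\Phi(G)$ and form the smooth Koszul candidates $\phi_j = \bar f_j / \sum_k |f_k|^2$. Then solve the resulting $\bar{\partial}$-equations with $L^\infty$ bounds on the strictly pseudoconvex domain $\Phi(G)$, using Henkin--Ram\'{\i}rez kernels in the ambient Stein manifold, which exist thanks to the Stein neighbourhood basis. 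This produces a bounded solution of the B\'{e}zout equation on $G$. The uniform $\bar{\partial}$ estimates near the strictly pseudoconvex boundary are the delicate point, and I would check those first.
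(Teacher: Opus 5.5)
\textbf{There is a genuine gap, and it is the one you anticipate: step 4 does not work, and the fallback does not close it.}

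Your steps 1--4 are in substance the paper's own argument. The paper gives no separate proof; it combines Rossi's theorem $M(A(G))=\overline{G}$, the Gleason property from \cite[Corollary 4.4]{37}, and the Banach-algebra reasoning of Theorems~\ref{Thm 5_A(G)} and~\ref{Corona}.

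By (\ref{ArensBanach}), corona data $f_1,\dots,f_m$ generate a proper ideal iff they have a common zero $\phi\in M(H^\infty(G))$. The condition $\sum_j|f_j|\geq\delta$ on $G$ forces any such $\phi$ to lie outside the Gel'fand closure of $G$. So solvability of all B\'ezout equations \emph{is} the density statement. Finite generation of $M_\a=\ker\phi_\a$ by the $z_j-\a_j$ only shows that the fibre over each interior point $\a$ is the single character $\phi_\a$. The proof of Theorem~\ref{Corona} applies (\ref{ArensBanach}) to these generators, never to arbitrary corona data.

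In fact no argument built only from interior Gleason solvability and Banach-algebra generalities can work. Take the Hartogs figure $\Omega=\{z\in D^2:|z_1|>1/2\ \textrm{or}\ |z_2|<1/2\}$. Every $f\in H^\infty(\Omega)$ extends to $D^2$ with the same sup-norm, via the Cauchy integral in $z_1$ over $|z_1|=r\in(1/2,1)$. Difference quotients taken one variable at a time, bounded by Schwarz--Pick, then show that $\Omega$ is Gleason solvable for $H^\infty(\Omega)$. Yet $|z_1|+|z_2-0.9|\geq 0.4$ on $\Omega$, while $z_1g_1+(z_2-0.9)g_2=1$ is impossible: evaluate the extensions at $(0,0.9)$. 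Pseudoconvexity must therefore enter in an essential analytic way, and in your steps 3--4 it never does.

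Separately, repeating the proof of Theorem~\ref{Thm 5_A(G)} verbatim does not even give Gleason solvability, because $M_\a\subset A(G)$ is not an ideal of $H^\infty(G)$. Indeed $(z_k-\a_k)g\notin A(G)$ whenever $g\in H^\infty(G)$ fails to extend continuously to some $p\in\partial G$ with $p_k\neq\a_k$. That step is repairable by the division results of \cite{24, 27}.

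Your fallback does not close the gap either; the ``delicate point'' you would check first is the whole problem.
\begin{itemize}
\item In the Koszul scheme the forms to be inverted are built from your $\phi_j\,\bar\partial\phi_k$, and $\bar\partial\phi_k$ contains $\overline{\partial f_l}$. For $f_l\in H^\infty(G)$ one only has $|\partial f_l(z)|\lesssim\mathrm{dist}(z,\partial G)^{-1}$, so the data are unbounded at $\partial G$. For $m\geq3$ one must also solve in bidegrees $(0,q)$ with $2\leq q\leq\min(n,m-1)$.
\item Henkin--Ram\'{\i}rez (Grauert--Lieb, Kerzman) kernels give uniform estimates for bounded data, not for data of this size.
\item The substitute that works on the disc, Carleson-measure control as in Carleson and Wolff, yields only BMO solutions for $n\geq2$ \cite{45}. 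This is exactly why Krantz and Li obtain $H^p$ and $A^p$ solutions but not $H^\infty$ ones \cite[Remark 1]{30}.
\end{itemize}

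Compare Gleason's problem. There the generators $z_j-\a_j$ are smooth on $\overline{G}$, so the Koszul data are bounded up to $\partial G$ and the uniform estimates apply. That is why interior division is easy, and why it carries no information about the boundary fibres.

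The statement contains the long-standing $H^\infty$ corona problem for the unit ball of $\cc^n$, $n\geq2$. A proof therefore needs a genuinely new boundary estimate, or a new description of the fibres of $M(H^\infty(G))$ over $\partial G$. Neither your proposal nor the paper's argument supplies one.
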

We have already shown in Theorem ~\ref{Thm 5_A(G)} that $G$ is
Gleason solvable for $H^\infty(G)$. However, as a part of our goal
to establish an equivalency between the two problems, in the
converse direction, we have
\begin{thm} \label{Gleason} Let $G$ be either a {\bf pseudoconvex} domain in
$\cc^n$ or else a ``polydomain" in $\cc^n$. Then $G$ is Gleason
solvable provided that the corona problem holds true in
$H^\infty(G)$.
\end{thm}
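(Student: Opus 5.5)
The plan is to establish Gleason solvability at a fixed $\alpha\in G$ by hand: build a smooth Hefer-type decomposition of $f$ and then correct it to a holomorphic one with a Koszul-complex argument. The corona hypothesis is used only to fix the spectral picture at $\alpha$. (Theorem~\ref{Thm 5_A(G)} already gives Gleason solvability unconditionally for these $G$, so formally there is nothing to prove. To make the implication ``corona $\Rightarrow$ Gleason'' meaningful I give a direct argument.)

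\textbf{Step 1 (spectral setup).} Fix $\alpha\in G$ and $f\in H^\infty(G)$ with $f(\alpha)=0$. By the corona hypothesis, $G$ is Gel'fand dense in $M(H^\infty(G))$. Hence every $\phi$ in the fiber over $\alpha$ is a limit of points $z\in G$ with $z\to\alpha$. Since $\alpha$ is an interior point, Cauchy estimates on a small ball give $\phi(g)=g(\alpha)$ for every $g\in H^\infty(G)$, so this fiber is just $\{\phi_\alpha\}$. Consequently the tuple $(z_1-\alpha_1,\dots,z_n-\alpha_n)$ has no common zero in $M(H^\infty(G))$ except $\phi_\alpha$, which is what makes a division problem for $f$ reasonable.

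\textbf{Step 2 (smooth division).} Choose $r>0$ with $\overline{B(\alpha,2r)}\subset G$ and a cutoff $\chi\in C_c^\infty(B(\alpha,2r))$ with $\chi\equiv 1$ on $B(\alpha,r)$. On $B(\alpha,2r)$ use the Hefer functions
$$H_j(z)=\int_0^1 \partial_j f(\alpha+t(z-\alpha))\,dt,$$
which are bounded there. Away from $\alpha$ use $f\,\overline{(z_j-\alpha_j)}/|z-\alpha|^2$. Set
$$h_j=\chi H_j+(1-\chi)\,f\,\overline{(z_j-\alpha_j)}/|z-\alpha|^2 .$$
Then $h_j$ is smooth and bounded on $G$, $\sum_j (z_j-\alpha_j)h_j=f$, and $\bar\partial h_j$ is supported in the compact annulus $\{r\le |z-\alpha|\le 2r\}\subset G$.

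\textbf{Step 3 (Koszul correction).} I look for bounded antisymmetric $v_{jk}$ with
$$\tilde h_j=h_j-\sum_k (z_k-\alpha_k)v_{jk}$$
holomorphic. Antisymmetry gives $\sum_j (z_j-\alpha_j)\tilde h_j=f$ automatically. The standard Koszul descent reduces this to solving a finite chain of equations $\bar\partial u=\omega$ for $(0,q)$-forms $\omega$, $1\le q\le n$. Each $\omega$ is built from $\bar\partial\chi$ and the bounded smooth quotients $\overline{(z_j-\alpha_j)}/|z-\alpha|^2$. Hence all data are smooth, $\bar\partial$-closed, and compactly supported in $G$.

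\textbf{Step 4 (main obstacle: bounded $\bar\partial$-solutions).} I need solutions $u$ that are bounded on $G$, even though general $L^\infty$ estimates fail on some pseudoconvex domains (Remarks A~1). This is the step I expect to be hardest; compact support of the data is what rescues it. For a {\bf pseudoconvex} domain in the sense fixed in Remarks B~3, I would use Henkin--Ram\'{\i}rez integral kernels. For compactly supported smooth data they give solutions continuous up to $\overline G$, hence bounded, and the top degree $q=n$ is handled by the same kernels. For a ``polydomain'' I would instead proceed one variable at a time, as in Remarks A~2, applying the one-variable Cauchy transform in each factor. That transform preserves boundedness for compactly supported data.

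With these bounded solutions in hand, the resulting $\tilde h_j$ lie in $H^\infty(G)$ and solve Gleason's problem at $\alpha$. Since $\alpha\in G$ was arbitrary, $G$ is Gleason solvable for $H^\infty(G)$.
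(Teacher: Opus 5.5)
There is a genuine gap, and it is in Step~2, not Step~4. For $n\ge 2$ the claim that $\bar\partial h_j$ is supported in the annulus $\{r\le|z-\alpha|\le 2r\}$ is false. Write $w=z-\alpha$. The quotients $\phi_j=\overline{w_j}/|w|^2$ are not holomorphic, since $\partial\phi_j/\partial\overline{w_k}=\delta_{jk}/|w|^2-\overline{w_j}\,w_k/|w|^4$. Hence
\[
\bar\partial h_j=\bar\partial\chi\,(H_j-f\phi_j)+(1-\chi)\,f\,\bar\partial\phi_j ,
\]
and for $f\not\equiv 0$ its support contains all of $G\setminus B(\alpha,2r)$. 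So the data of your Koszul descent in Step~3 are supported in $G\setminus B(\alpha,r)$ but reach $\partial G$. There they are merely bounded, because their coefficients contain $f$, which has no boundary regularity. The mechanism of Step~4, ``compact support rescues it'', therefore never applies, for either class of domains.

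This slip is not cosmetic. Had the data really been compactly supported, the geometry of $G$ would be irrelevant. Compactly supported $\bar\partial$-closed $(0,q)$-forms on $\cc^n$ with $1\le q\le n-1$ (the degrees the descent uses) have compactly supported smooth solutions. The whole descent could then be run on $\cc^n$, giving Gleason solvability at interior points of \emph{every} bounded domain. That is false in general. If the envelope of holomorphy of $G$ has a point $\beta$ on another sheet over $\alpha$, separated from $\alpha$ by $H^\infty(G)$, then evaluation at $\beta$ is a character different from $\phi_\alpha$ that annihilates every $z_j-\alpha_j$.

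What Step~4 actually needs is a sup-norm estimate for bounded $\bar\partial$-closed forms that extend to $\partial G$. This is exactly the kind of estimate that, as Remarks~A~1 recalls, can fail on smooth pseudoconvex domains.
\begin{itemize}
\item \textbf{Pseudoconvex case.} On strictly pseudoconvex domains with smooth boundary such estimates are available in all degrees (Henkin, Grauert--Lieb, and their extensions to $(0,q)$-forms). Invoking them for bounded, rather than compactly supported, data turns your Steps~2--3 into essentially the Koszul-complex argument of Kerzman and Nagel \cite{27}.
\item \textbf{``Polydomain'' case.} Your justification (the Cauchy transform on compactly supported data) lapses for the same reason, and it does not address factors of dimension $>1$. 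It is simpler to bypass $\bar\partial$ and freeze variables, as in Remarks~A~2. For planar factors this is elementary. Choose $r$ so that $\{|z_i-\alpha_i|\le r\}$ lies in the $i$-th factor. Then the quotients $\bigl(f(\alpha_1,\dots,\alpha_{i-1},z_i,\dots,z_n)-f(\alpha_1,\dots,\alpha_i,z_{i+1},\dots,z_n)\bigr)/(z_i-\alpha_i)$ are holomorphic and bounded by $2\|f\|_\infty/r$ by the maximum principle in $z_i$. Their combination with the $z_i-\alpha_i$ telescopes to $f$.
\end{itemize}

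Finally, Step~1 is never used afterwards. So your argument, once repaired, proves Gleason solvability without the corona hypothesis, as Theorem~2.3 asserts. The paper's own proof does use the hypothesis: it obtains the result from \cite[Theorem 4.3]{37}, via the locally Stein structure of $H^\infty(G)$ and the density of $G$ in $M(H^\infty(G))$.
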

\noindent {\bf Proof} Let the corona problem hold true in
$H^\infty(G)$. Clearly, $H^\infty(G)$ is a semi-simple Banach
algebra of power series in $z_1, z_2, \dots, z_n$. By the
definition of locally Stein algebra ~\cite[Introduction]{37}, we
see that $H^\infty(G)$ is a locally Stein algebra since the
completion in the compact open topology of
$\widehat{H^\infty(G)}|_{G}$ is the Fr\'{e}chet algebra
$\textrm{Hol}(G)$. Alternatively, since $H^\infty(G)$ satisfies
the hypotheses of ~\cite[Lemma 3.1]{37}, it has an analytic
variety at $\phi_\a,\;\a \in G$, where $M_\a =
\textrm{ker}\,\phi_\a$. Hence $\widehat{H^\infty(G)}$ has an
analytic structure in each point $\phi_\a,\;\a \in G$. Now it is
easy to show that $H^\infty(G)$ is a locally Stein-Banach algebra
by applying proof of (ii) $\Rightarrow$ (i) of ~\cite[Theorem
4.1]{37}. Hence, by ~\cite[Theorem 4.3]{37} (applied to
$H^\infty(G)$), we see that Gleason's problem is solvable, since
$G$ is dense in
$M(H^\infty(G))$. $\hfill \Box$\\

\noindent{\bf Remark D} By Theorems 2.4 and 2.7, we see that the
two significant problems from the theory of SCV are equivalent.
Moreover the corona theorem implies the I. J. Schark's theorem
(see 4.1.1 below for details) for $H^\infty(G)$, where $G$ is as
in the above theorem (see, also, ~\cite{19, 33} for the same
result, with $G$ a strongly pseudoconvex domain in ~\cite{33}). In
fact, the corona problem has a topological nature (whether the
domain $G$ is dense in $M(H^\infty(G))$ w.r.t. the Gel'fand
topology) whereas Gleason's problem has only the algebraic nature
(whether a maximal ideal $M_\a,\; \a \in G$, is algebraically
finitely generated in $H^\infty(G)$; equivalently, whether $G$ has
the Gleason $H^\infty(G)-$property). The corona problem also has
an algebraic nature (given a finite corona data whether the
B\'{e}zout's equation holds true).
\section{Krantz and Li's approach for Corona problem.}
Krantz and Li restated the corona problem in the language of a
certain bounded operator which needs to be onto, if one wants to
solve the original problem ~\cite{30}. With this approach, they
succeed to solve the corona problem for Bergman spaces $A^p(G),
\;0 < p < \infty$, where $G$ is the unit ball as well as a bounded
strictly pseudoconvex domain in $\cc^n$ with $C^3$ boundary (but
for the range $1 < p < \infty$), for Hardy spaces $H^p(G), \;0\,<
p < \infty$ and for holomorphic Zygmund spaces $H\Lambda_\a(G),\;0
< \a < \infty$, where $G$ is a bounded strictly pseudoconvex
domain in $\cc^n$ with $C^3$ boundary (see Theorems 1.1 - 1.4 in
~\cite{30}). We remark that the corona data $f = (f_1, f_2, \dots,
f_m)$ belongs to $H^\infty(G)$ in Theorems 1.1 - 1.3, but it is
easy to see that they could have taken the corona data in the
respective Banach spaces as well since for $1 \leq p < \infty$,
$H^\infty \hookrightarrow H^p$ continuously. We also remark that
they could not solve the original corona problem for $H^\infty(G)$
(see ~\cite[Remark 1]{30}); also, they used the
$\overline{\partial}-$technique to prove these theorems, which is
not always possible for arbitrary strictly pseudoconvex domains
(see, also, ~\cite{45}). Indeed, we show that the bounded operator
$S_f$ is onto, solving the original corona problem in
\begin{thm} \label{KL_Sf} Let $G$ be a strictly pseudoconvex domain
in $\cc^n$ with $C^\infty$ boundary or a ``polydomain" in $\cc^n$.
Let $f\,=\,(f_1,\,f_2,\,\dots,\,f_m)$ be the corona data from the
product space $H^\infty(G)_m$. Then
$S_f\,:\,H^\infty(G)_m\,\rightarrow\,H^\infty(G)$ is onto. In
particular, the corona problem holds true in $H^\infty(G)$.
\end{thm}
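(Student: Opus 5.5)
The plan is to reduce surjectivity of $S_f$ to the single statement $1 \in \operatorname{ran} S_f$, and then obtain that statement from the density of $G$ in $M(H^\infty(G))$ supplied by Theorem~\ref{Corona}. Recall that $S_f(g_1,\dots,g_m)=\sum_{j=1}^m f_jg_j$. It is a bounded linear map from $H^\infty(G)_m$ (with, say, the max-norm) to $H^\infty(G)$, with $\|S_f\|\le \sum_j\|f_j\|_\infty$.

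\textbf{Step 1 (reduction).} The range $I_f=\operatorname{ran}S_f$ is exactly the algebraic ideal of $H^\infty(G)$ generated by $f_1,\dots,f_m$. If $1=\sum_j f_jg_j$ for some $g_j\in H^\infty(G)$, then for any $h\in H^\infty(G)$ we get
$$S_f(hg_1,\dots,hg_m)=h .$$
Hence $S_f$ is onto, and in fact has the bounded right inverse $h\mapsto (hg_1,\dots,hg_m)$. So it suffices to solve the B\'ezout equation for the given corona data.

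\textbf{Step 2 (placing $G$ under Theorem~\ref{Corona}).} A bounded strictly pseudoconvex domain in $\cc^n$ with $C^\infty$ boundary has a Stein neighbourhood basis, so $\overline{G}$ is an $S_\delta$ set. Thus $G$ is a \textbf{pseudoconvex} domain in the sense fixed in Remarks B~3. In the polydomain case nothing needs to be checked. Theorem~\ref{Corona} therefore applies (via Theorem~\ref{Thm 5_A(G)}) and gives that $G$ is Gel'fand dense in $M(H^\infty(G))$.

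\textbf{Step 3 (Banach algebra argument).} Suppose $1\notin I_f$. Then $I_f$ is a proper ideal, so it lies in a maximal ideal $\ker\phi$ for some $\phi\in M(H^\infty(G))$; this is the criterion (\ref{ArensBanach}). Consequently $\hat f_j(\phi)=0$ for all $j$. By Step 2 there is a net $z_\iota\in G$ with $\phi_{z_\iota}\to\phi$ in the Gel'fand topology. Each $\hat f_j$ is Gel'fand continuous, so
$$\sum_j |f_j(z_\iota)| \;\to\; \sum_j|\hat f_j(\phi)| = 0 .$$
This contradicts the corona hypothesis $\sum_j|f_j|\ge\epsilon>0$ on $G$. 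Hence $1\in I_f$, and Step 1 finishes the proof.

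As an optional quantitative complement, once surjectivity is known the open mapping theorem gives a constant $C$ such that every $h$ has a preimage of norm at most $C\|h\|_\infty$. This is the form in which Krantz and Li phrase the result, and it yields the analogue of their Theorems~1.1--1.3 with $H^\infty$ in place of $A^p$ and $H^p$.

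The main obstacle is Step 2: the entire weight rests on density of $G$ in $M(H^\infty(G))$, that is, on Theorem~\ref{Corona}. Steps 1 and 3 are formal Banach algebra facts. The only genuinely domain-specific input at this stage is checking that the $C^\infty$ strictly pseudoconvex hypothesis matches the class covered by Theorem~\ref{Corona}. I would verify this first by citing the existence of Stein neighbourhood bases for such domains, as in \cite{40}.
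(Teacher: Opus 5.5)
Your Steps 1 and 3 are correct, and your route is essentially the paper's. The paper's proof of this theorem is the chain ``$I$ is improper $\iff$ $I$ lies in no maximal ideal $\iff$ $\sum_i f_ih_i=1$ for some $h$ $\iff$ $S_f$ is onto''; the product algebra $H^\infty(G)_m$ and the operators $T_g$ play no logical role, and $S_f=T_0$ is only notation. You are in fact more explicit than the paper. It never says why $I$ avoids every maximal ideal, whereas you supply the reason: Gel'fand density of $G$ in $M(H^\infty(G))$. However, density of $G$ is \emph{equivalent} to solvability of the B\'ezout equation for all corona data. So Steps 1 and 3 are only the easy half of a standard equivalence, the whole content of the theorem sits in Step 2, and that step does not go through.

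The gap is that Theorem~\ref{Corona} does not actually establish density. Its proof uses nothing about $G$ beyond Gleason solvability at the points $\alpha\in G$. That only tells you each $\ker\phi_\alpha$, $\alpha\in G$, is generated by $z_1-\alpha_1,\dots,z_n-\alpha_n$. The proof then transfers a statement about these generators to arbitrary corona data $f_1,\dots,f_m$, although the generators share the common zero $\alpha$ and so are not corona data at all. This gives no information about characters $\phi\in M(H^\infty(G))$ outside the Gel'fand closure of $G$, which is exactly where a common zero of $\hat f_1,\dots,\hat f_m$ would have to sit. An argument using only Gleason solvability at the points where the data are bounded below, plus general Banach algebra facts, cannot work. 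Take $A=H^\infty(D)$ and $U=\{|z|<1/2\}$: every $\ker\phi_\alpha$ with $\alpha\in U$ is generated by $z-\alpha$, yet $|z-3/4|\ge 1/4$ on $U$ and $z-3/4$ is not invertible in $A$. For the ball, the polydisc and general $C^\infty$ strictly pseudoconvex domains, Step 2 really needs the $H^\infty$ corona theorem in several variables, which is a well-known open problem. The $\bar\partial$ results of Krantz and Li cover $A^p$, $H^p$ and Zygmund spaces, not $H^\infty$. Invoking Theorem~\ref{Corona} therefore relocates the difficulty rather than resolving it, and the paper's own proof of this theorem has the same hole. Your check that $\overline{G}$ is an $S_\delta$ set is correct but beside the point.
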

\noindent {\bf Proof} This proof is, indeed, a little reflection
of the proof of Theorem 2.3 above. First, we show in the proof of
Theorem 2.7 that $H^\infty(G)$ is a semi-simple Banach algebra of
power series in $\F_n$, satisfying the hypotheses of ~\cite[Lemma
3.1]{37}. Now, following proof of (iii) of ~\cite[Lemma 3.1]{37},
we see that the product space $H^\infty(G)_m$ is, indeed, a Banach
algebra with an identity; the algebraic operations are
coordinatewise. Further, with a minor modification, we also see
that for each
$g\,=\,(g_1,\,g_2,\,\dots,\,g_m)\,\in\,H^\infty(G)_m$,
$$T_g\,:=\,\oplus_{i =
1}^mT_{g_i},\;\oplus_{i\,=\,1}^mh_i\,\mapsto\,\sum_{i\,=\,1}^m(f_i
- g_i)h_i,$$ is a bounded linear map on $H^\infty(G)_m$, where
$T_{g_i}\,:\,h\,\mapsto \,(f_i - g_i)h$ is a bounded linear
operator on $H^\infty(G)$. Now, given the corona data
$f\,=\,(f_1,\,f_2,\,\dots,\,f_m)$, let $I$ be the algebraically
finitely generated ideal in $H^\infty(G)$. Then $I$ is an improper
ideal if and only if $I$ cannot be contained in any maximal ideal
of $H^\infty(G)$, and $\sum_{i\,=\,1}^mf_ih_i\,=\,1$ for some
$h\,=\,(h_1,\,h_2,\,\dots,\,h_m)\,\in\,H^\infty(G)_m$. That is,
$S_f$ is onto (clearly, for the corona data
$f\,=\,(f_1,\,f_2,\,\dots,\,f_m)$, $S_f$ is obviously a bounded
linear map; in fact, $S_f \,=\,T_0$). $\hfill \Box$
\section{Concluding remarks on our approaches and open problems.}
Continuing our discussion from Introduction, in ~\cite{16}, it is
notable to recall Brudnyi's results related to the corona problem
for $H^\infty(R)$, where $R$ is a Caratheodory hyperbolic Riemann
surface ~\cite{10}. Recently, the corona problem has been solved
for some basic domains in $\cc^n$ such as $L^\infty-$pseudoconvex
domains ~\cite{44}. Recall that, for $n = 2$, this result was
obtained by Krantz ~\cite{28} (cf. Remarks C. 1.), and in
~\cite{39}, a similar conclusion is obtained under a much more
restrictive assumption that there exist $j \neq k$ such that
$$\overline{|f_j|^{-1}(-\epsilon,\,\epsilon)} \,\bigcap\,
\overline{|f_k|^{-1}(-\epsilon,\,\epsilon)} \,\bigcap\,
\partial\,G = \emptyset,\; \partial\,G \;\textrm{the
boundary of}\; G.$$ Also, Varopoulos noted between Theorems 3.1.1
and 3.1.2 why the Carleson's classical approach of solving
$\bar{\partial_b}-$problem fails in the SCV case while proving the
corona theorem (see Theorem on p. 272 of ~\cite{45} wherein the
solution functions are BMO).

Above, we solve the corona problem (algebraic formulation) by
considering the approach of solving the {\bf ideal problem} (which
is of algebraic nature). Thus this approach clearly shows the
possible connection between the corona problem and Gleason's
problem in the SCV case (cf. ~\cite[Proposition 2]{46}). However,
until we consider this connection for the first time in
~\cite[Theorem 4.3]{37}, as far as we know, no analysts did take
this route, and it is not a great surprise for this for obvious
reasons. Then, our extension of the results analogous to the
results of Clos also plays a significant role while filling the
gap via the Gleason solvability of certain ``pseudocconvex/poly"
domains $G$ when one wants to take the approach of solving the
ideal problem by using the Banach-algebraic technique. This
actually provides the necessary and sufficient conditions for the
ideal problem ~\cite[\S 7]{16}). Thus we have a very strong
connection between the two significant problems in the theory of
SCV through this equivalence. Moreover, the solvability of both
the problems for Lipschitz algebras of holomorphic functions in
~\cite{38} may provide us a concrete base for applying the
classical dead-end space method to get the solutions of these two
problems for various familiar Banach spaces of bounded holomorphic
functions (e.g., Hardy-Sobolev spaces, (holomorphic mean)
Besov-Lipschitz spaces and a weighted Bergman space) on bounded
symmetric domains in $\cc^n$ (including the unit ball).

\noindent {\bf 4.1 Open problems.} We now discuss a couple of open
problems in view of our approach as follows.

\noindent {\bf 4.1.1} It is worthwhile mentioning some work that
is somewhat (tangentially) related to the work given here (see
~\cite{1, 2, 3, 4, 5} for details). In fact, their work is
specifically concerned with the existence of (local) analytic
structure in spectra of certain algebras of bounded holomorphic
functions on the open unit ball $B_X$ of a complex Banach space
$X$. For example, they study the Gleason parts of spectra of these
algebras when $X \,=\, c_0$ in ~\cite{4}, and study the I. J.
Schark's theorem for Banach algebras $A_u(B_X)$ and
$H^\infty(B_X)$, where $X = c_0, \ell^1, \ell^2$, in ~\cite{2, 3},
and study the same theorem for the Fr\'{e}chet algebra $H_b(X)$,
where $X = \ell^1, \ell^2$, in ~\cite{3}. We establish this
theorem for holomorphic Lipschitz algebras in ~\cite{38}. To
establish a relation between their work with the current work, we
below conjecture that the question about the I. J. Schark's
theorem may have an affirmative solution.

In ~\cite[4.4]{37}, we have raised Gleason's problem (as well as
the Gleason solvability problem) in the infinite dimensional case
for Banach algebras $H^\infty(B_X)$ and ${\cal A}_u(B_X)$ (a
closed subalgebra of $H^\infty(B_X)$), where $B_X$ is the open
unit ball of a Banach space $X$. It is interesting to further
develop our approach (i.e., the abstract (functional analytic)
method and an equivalency of the two problems) so as to solve both
the problems for these algebras.

One reason for interest in solving the corona problem in the
infinite dimensional case stems from the point of view of the I.
J. Schark's Cluster Value Theorem in this case. The authors in
~\cite{2} concentrated only on obtaining several cluster value
theorems in this case, leaving to prove this theorem in the case
$H^\infty(B_X)$, where $X\,=\,\ell^2$ (see below).

Recall that if $f \in H^\infty(B_X)$ and $x \in
\overline{B_{X^{**}}}$, the cluster set of $f$ at $x$ is the set
$$\textrm{Cl}(f,\,x) :=  \{w \in \cc : \exists (x_n) \subset B_X
\,\textrm{that converges weak-star to}\; x,\,f(x_n) \rightarrow
w\}.$$ Then the I. J. Schark's {\it Cluster Value Theorem} is: For
$f$ and $x$ as above,
$$\textrm{Cl}(f,\,x)\,=\,\{\phi(f)\,:\,\pi(\phi)\,=\,x\}\,=\,\hat{f}(M_{x}),$$
where $\pi : M(H^\infty(B_X)) \rightarrow \cc$ and the fiber of
$M(H^\infty(B_X))$ over $x \in \overline{B_{X^{**}}}$ to be $M_x =
\pi^{-1}(x)$. In ~\cite{2}, this theorem is proved for $X = c_0$.
We conjecture that the theorem holds true for this case as well
(note that they could not prove it because the crucial lemma,
proved for the case $X = c_0$, fails in this case). This theorem
is also known as the ``poor man's/weak corona theorem", since if
the corona theorem holds true, then the I. J. Schark's theorem
also holds, but the converse may not hold. Thus, based on the
results obtained here, we now have a road map to establish the I.
J. Schark's cluster value theorems as follows. One first solves
the two questions from ~\cite[4.4]{37}, and then, as an
application, one establishes the corona problem, using the Banach
algebra method, which would also establish the I. J. Schark's
theorems.

\noindent {\bf 4.1.2} In connection to Theorems 2.4 and 2.7, it is
also natural to ask whether the I. J. Schark's theorem implies the
Gleason solvability for $H^\infty(G)$ and $A(G)$, where $G$ is as
in those theorems. The reason is: McDonald used the Gleason
solvability ~\cite[p. 215 (4)]{27} while proving the I. J.
Schark's theorem for $H^\infty(G)$, where $G$ is a smooth strongly
pseudoconvex domain in $\cc^n$.\\

\noindent {\bf Competing interests} The author has no relevant
financial or non-financial interests to disclose.


\begin{thebibliography}{Dillo 83}
\bibitem[1]{1} Aron, R.M., Cole, B., Gamelin, T.W.: Spectra of algebras of analytic functions on a Banach space. J. Reine Angew. Math. 415, 51-93 (1991)

\bibitem[2]{2} Aron, R.M., Carando, D., Gamelin, T.W., Lassale, S., Maestre, M.: Cluster values of analytic functions on a Banach space. Math. Ann. 353, 293-303 (2012)

\bibitem[3]{3} Aron, R.M., Carando, D., Lassale, S., Maestre, M.: Cluster values of holomorphic functions of bounded type. Trans. Amer. Math. Soc. 368, 2355-2369 (2016)

\bibitem[4]{4} Aron, R.M., Dimant, V., Lassale, S., Maestre, M.: Gleason parts for algebras of holomorphic functions in infinite dimensions. Rev. Mat. Complut. 33, 415-436 (2020)

\bibitem[5]{5} Aron, R.M., Falc\'{o}, J., Garc\'{i}a, D., Maestre, M.: Analytic structure in fibers. Studia Math. 240, 101-121 (2018)

\bibitem[6]{6} Backlund, U., F\"{a}llstr\"{o}m, A.: The Gleason property for Reinhardt domains. Math. Ann. 308, 85-91 (1997)

\bibitem[7]{7} Beatrous Jr, F.: H\"{o}lder estimates for the $\bar{\partial}$-equation with a support condition. Pacific J. Math. 90, 249-257 (1980)

\bibitem[8]{8} Berndtsson, B.: A smooth pseudoconvex domain in $\cc^2$ for which $L^\infty-$estimates for $\bar{\partial}$ do not hold. Ark. Mat. 31, 209-218 (1993)

\bibitem[9]{9} Berndtsson, B., Chang, S.Y., Lin, K.C.: Interpolating sequences in the polydisc. Trans. Amer. Math. Soc. 302, 161-169 (1987)

\bibitem[10]{10} Brudnyi, A.: Corona problem for $H^\infty$ on Riemann surfaces. In: R.G. Douglas, et al. (Eds.), The Corona Problem, Fields Institute Communications 72, pp. 31-45. Springer, New York (2014)

\bibitem[11]{11} Carleson, L.: Interpolation by bounded analytic functions and the corona problem. Ann. Math. (2) 76, 547-559 (1962)

\bibitem[12]{12} Carpenter, R.L.: Principal ideals in $F-$algebras. Pacific J. Math. 35, 559-563 (1970)

\bibitem[13]{13} Clos, T.G.: Solvability of the Gleason problem on a class of bounded pseudoconvex domains. Complex Anal. Oper. Theory 16, no. 58, 1-9 (2022)

\bibitem[14]{14} Clos, T.G., Izzo, A.J.: Approximation by an algebra generated by holomorphic and conjugate holomorphic functions. Rocky Mountain J. Math. 52, 1289-1294 (2022)

\bibitem[15]{15} Diederich, K., Forn\ae ss, J.E.: Pseudoconvex domains: An exmaple with non-trivial nebenh¨ulle. Math. Ann. 225, 275-292 (1977)

\bibitem[16]{16} Douglas, R.G., et al.: The Corona Problem, Connections Between Operator Theory, Function Theory, and Geometry. In: R.G. Douglas, et al. (Eds.), The Corona Problem, Fields Institute Communications 72, pp. 1-29. Springer, New York (2014)

\bibitem[17]{17} Forn\ae ss, J.E., \O vrelid, N.: Finitely generated ideals in $A(\Omega)$. Ann. Inst. Fourier (Grenoble) 33, 77-85 (1983)

\bibitem[18]{18} Forn\ae ss, J.E., Sibony, N.: Smooth pseudoconvex domains in $\cc^2$ for which the corona theorem and $L^p$ estimates for $\bar{\partial}$ fail. In: Complex Analysis and Geometry, Univ. Ser. Math., pp. 209-222. Plenum Press, New York (1993)

\bibitem[19]{19} Gamelin, T.W.: Iversen's theorem and fiber algebras. Pacific J. Math. 46, 389-414 (1973)

\bibitem[20]{20} Garnett, J.B.: Bounded Analytic Functions. Graduate Texts in Mathematics, Vol. 236, First edn. Springer, New York (2007)

\bibitem[21]{21} Gleason, A.M.: Finitely generated ideals in Banach algebras. J. Math. Mech. 13, 125-132 (1964)

\bibitem[22]{22} Goldmann, H.: Uniform Fr\'{e}chet algebras. North Holland Publ. Co., Elevier Science Publi., Amsterdam (1990)

\bibitem[23]{23} Hakim, M., Sibony, N.: Spectre de $A(\bar{\Omega})$ pour les domains born\'{e}s faiblement pseudoconvexes r\'{e}guliers. J. Funct. Anal. 37, 127-135 (1980)

\bibitem[24]{24} Henkin, G.M.: Approximation of functions in strictly pseudoconvex domains and a theorem of Z.L. Leibenzon. Bull. Acad. Polon. Sci S\'{e}r. Math. Astronom. Phys. 19, 37-42 (1971) (in Russian)

\bibitem[25]{25} Hu, Z.: Gleason's problem for harmonic mixed norm and Bloch spaces in convex domains. Math. Nachr. 279, 164-178 (2006)

\bibitem[26]{26} Jak\'{o}bczak, P.: Extension and decomposition operators in products of strictly pseudoconvex sets. Annales Polon. Math. 44, 219-237 (1984)

\bibitem[27]{27} Kerzman, N., Nagel, A.: Finitely generated ideals in certain function algebras. J. Funct. Anal. 7, 212-215 (1971)

\bibitem[28]{28} Krantz, S.G.: The corona problem with two pieces of data. Proc. Amer. Math. Soc. 138, 3651-3655 (2010)

\bibitem[29]{29} Krantz, S.G.: The corona problem in several complex variables. In: R.G. Douglas, et al. (Eds.), The Corona Problem, Fields Institute Communications 72, pp. 107-126. Springer, New York (2014)

\bibitem[30]{30} Krantz, S.G., Li, S.Y.: Some remarks on the corona problem on strongly pseudoconvex domains in $\cc^n$. Illinois J. Math. 39, 323-349 (1995)

\bibitem[31]{31} Krantz, S.G., Li, S.Y.: Explicit solutions for the corona problem with Lipschitz data in the polydisc. Pacific J. Math. 174, 443-458 (1996)

\bibitem[32]{32} Krantz, S.G., Li, S.Y.: Factorization of functions in subspaces of $L^1$ and applications to the corona problem. Indiana Univ. Math. J. 45, 83-102 (1996)

\bibitem[33]{33} McDonald, G.: The maximal ideal space of $H^\infty + C$ on the ball in $\cc^n$. Canad. J. Math. 31, 79-86 (1979)

\bibitem[34]{34} Noell, A.: The Gleason problem for domains of finite type. Complex Variables Theory Appl. 4, 233-241 (1985)

\bibitem[35]{35} Ortega, J.M.: The Gleason problem in Bergman-Sobolev spaces. Complex Variables Theory Appl. 20, 157-170 (1992)

\bibitem[36]{36} Patel, S.R.: Uniqueness of the Fr\'{e}chet algebra topology on certain Fr\'{e}chet algebras. Studia Math. 234, 31-47 (2016)

\bibitem[37]{37} Patel, S.R.: On functional analytic approach for Gleason's problem in the theory of SCV, Banach J. Math. Anal. 16, no. 47, 1-17 (2022)

\bibitem[38]{38} Patel, S.R.: On functional analytic approach for corona and Gleason's problems for holomorphic Lipschitz algebras. Extracta Math. 41, 159-184 (2026)

\bibitem[39]{39} Preda, O.: The corona problem with restrictions on the relative position of sublevel sets. Arch. Math. (Basel) 105, 563-569 (2015)

\bibitem[40]{40} Rossi, H.: Holomorphically convex sets in several complex variables. Ann. Math. 74, 470-493 (1961)

\bibitem[41]{41} Rudin, W.: Function theory in the unit ball of $\cc^n$. Classics in Mathematics, Springer-Verlag, Berlin (2008) Reprint of the 1980 edition

\bibitem[42]{42} Sibony, N.: Probl\'{e}me de la couronne pour des domaines pseudoconvexes \'{a} bord lisse. Ann. Math. (2) 126, 675-682 (1987)

\bibitem[43]{43} Sibony, N.: Some aspects of weakly pseudoconvex domains. Proc. Symp. Pure Math. 52, 199-231 (1991)

\bibitem[44]{44} Tikaradze, A.: On the corona problem for strongly pseudoconvex domains. Analysis Math. 48, 1209-1212 (2022)

\bibitem[45]{45} Varopoulos, N.Th.: BMO functions and the $\bar{\partial}-$equation. Pacific J. Math. 71, 221-273 (1977)

\bibitem[46]{46} \.Zelazko, W.: On maximal ideals in commutative m-convex algebras. Studia Math. 58, 291-298 (1976)

\bibitem[47]{47} Zhu, K.H.: The Bergman spaces, the Bloch space, and Gleason's problem. Trans. Amer. Math. Soc. 309, 253-268 (1988)
\end{thebibliography}
\end{document}